\newtheorem{thm}{Theorem}[section]
\newtheorem{lem}[thm]{Lemma}
\newtheorem{coro}[thm]{Corollary}
\newtheorem{remark}[thm]{Remark}
\newcommand{\dbrac}[1]{{\llbracket #1 \rrbracket}} 	% double brackets
\newcommand{\boks}[2]{({#1, #2})}   % A box with the old
\newcommand{\pattern}[4]{										% mesh pattern
	\raisebox{0.6ex}{
		\begin{tikzpicture}[scale=0.35, baseline=(current bounding box.center), #1]
		\foreach \x/\y in {#4}		\fill[gray!20] (\x,\y) rectangle +(1,1);
%		\foreach \x/\y in {#4}		\fill[pattern=north east lines] (\x,\y) rectangle +(1,1);
		\draw (0.01,0.01) grid (#2+0.99,#2+0.99);
		\foreach \x/\y in {#3}		\filldraw (\x,\y) circle (5pt);
		\end{tikzpicture}}
}
\tikzset{global scale/.style={
		scale=#1,
		every node/.append style={scale=#1}
	}
}
\begin{document}
\begin{center}
{\large \bf  Distributions of several infinite families of mesh patterns}
\end{center}

\begin{center}
Sergey Kitaev$^{a}$,  Philip B. Zhang$^{b}$,  Xutong Zhang$^{c}$\\[6pt]

$^{a}$Department of Computer and Information Sciences \\
University of Strathclyde, 26 Richmond Street, Glasgow G1 1XH, UK\\[6pt]

$^{b,c}$College of Mathematical Science \\
Tianjin Normal University, Tianjin  300387, China\\[6pt]

Email:  $^{a}${\tt sergey.kitaev@cis.strath.ac.uk},
           $^{b}${\tt zhang@tjnu.edu.cn},
           $^{c}${\tt zhang.xutong@foxmail.com}
\end{center}

\noindent\textbf{Abstract.}
Br\"and\'en and Claesson  introduced mesh patterns to provide explicit expansions for certain permutation statistics as linear combinations of (classical) permutation patterns. The first systematic study of the avoidance of mesh patterns was conducted by Hilmarsson et al., while the first systematic study of the distribution of mesh patterns was conducted by the first two authors.

In this paper, we provide far-reaching generalizations for 8 known distribution results and 5 known avoidance results related to mesh patterns by giving distribution or avoidance formulas for certain infinite families of mesh patterns in terms of distribution or avoidance formulas for smaller patterns. Moreover, as a corollary to a general result, we find the distribution of one more mesh pattern of length 2. \\[5pt]
\noindent {\bf Keywords:}  mesh pattern, distribution, avoidance \\[5pt]
\noindent {\bf AMS Subject Classifications:} 05A15

\section{Introduction}\label{intro}
Patterns in permutations and words have attracted much attention in the literature (see~\cite{Kit} and references therein), and this area of research continues to grow rapidly.
The notion of a {\em mesh pattern}, generalizing several classes of patterns, was introduced by Br\"and\'en and Claesson \cite{BrCl} to provide explicit expansions for certain permutation statistics as, possibly infinite, linear combinations of (classical) permutation patterns. Several papers are dedicated to the study of mesh patterns and their generalizations  \cite{AKV,Borie,JKR,KL,KR1,KRT,T1,T2}.

Let $\dbrac{0,k}$ denote the interval of the integers from $0$ to $k$. A pair $(\tau,R)$, where $\tau$ is a permutation of length $k$ written in one-line notation and $R$ is a subset of $\dbrac{0,k} \times \dbrac{0,k}$, is a
\emph{mesh pattern} of length $k$. Let $\boks{i}{j}$ denote the box whose corners have coordinates $(i,j), (i,j+1),
(i+1,j)$, and $(i+1,j+1)$. Let the horizontal lines represent the values,  and the vertical lines denote the positions in the pattern. Mesh patterns can be drawn by shading the boxes in $R$. For example, the picture
\[
\pattern{scale=1}{3}{1/2,2/3,3/1}{1/2, 2/1}
\]
represents the mesh pattern with $\tau=231$ and $R = \{\boks{1}{2},\boks{2}{1}\}$. A mesh pattern $(\tau,R)$ of length $k\geq 2$ is {\em irreducible} if the permutation $\tau=\tau_1\tau_2\cdots\tau_k$ is irreducible, that is, if there exists no $i$,  where $2\leq i\leq k$, such that $\tau_j<\tau_i$ for all $1\leq j<i$. For convenience, in this paper we assume that if  $\tau$ is of length 1 then it is {\em not} irreducible, even though normally such a $\tau$ is assumed to be irreducible.  All mesh patterns of interest in this paper can be found in Tables~\ref{tab-1}--\ref{tab-3}, where patterns' numbers $<66$ are coming from \cite{Hilmarsson2015Wilf,SZ}. Also, we let $Z:= \pattern{scale = 0.8}{1}{1/1}{0/0,1/1}$.

A subsequence $\pi'=\pi_{i_1}\pi_{i_2}\cdots\pi_{i_k}$ of a permutation $\pi=\pi_1\pi_2\cdots \pi_n$ is an occurrence of a mesh pattern $(\tau,R)$ if (a) $\pi'$ is order-isomorphic to $\tau$, and (b) the shaded squares given by $R$ do not contain any elements of $\pi$ not appearing in $\pi'$. For example, the mesh pattern of length 3 drawn above appears twice in the permutation $24531$ (as the subsequences 241 and 453). Note that even though the subsequences 251 and 451 are order isomorphic to 231 (the $\tau$ in the drawn pattern), they are not occurrences of the pattern because of the elements 4 and 3, respectively, be in the shaded squares. See \cite{Hilmarsson2015Wilf} for more  examples of occurrences of mesh patterns in permutations.

Let $S_n$ be the set of  permutations of length $n$.
%, which we call $n$-permutations.
Given a permutation $\pi$, denote by $p(\pi)$ the number of occurrences of pattern $p$ in $\pi$. Denote by  $S(p)$ set of  permutations avoiding $p$.
% Besides, we let ``g.f.'' stand for ``generating function'' and
We let $A_p(x)$ be the generating function for $S(p)$ and let
$$F_p(x,q):=\sum_{n\geq 0}x^n\sum_{\pi\in S_n}q^{p(\pi)}.$$

In this paper we provide various generalizations of results in~\cite{SZ}. The main idea of this paper is to consider a mesh pattern $p$, and to replace some of its unshaded boxes by mesh patterns. To illustrate this idea, consider the mesh pattern

\begin{center}
$P=$ \begin{tikzpicture}[scale=1, baseline=(current bounding box.center)]
	\foreach \x/\y in {0/0,0/1,1/0}		
		\fill[gray!20] (\x,\y) rectangle +(1,1);
	\draw (0.01,0.01) grid (1+0.99,1+0.99);
	\filldraw (1,1) circle (3pt) node[below left]{\quad};
	\filldraw (1.2,1.7) circle (0pt) node[below right] {$p_1$};
	\end{tikzpicture}
\end{center}
obtained from the smaller mesh pattern $Y = \pattern{global scale = 0.8}{1}{1/1}{0/0,0/1,1/0}$ by inserting a mesh pattern $p_1$ in the box $(1,1)$. We can then find the distribution of $P$ in terms of the distribution of $p_1$, which not only allows us to obtain three results in~\cite{SZ} at the same time (distributions of the patterns Nr.\ 12, 13, and 17; see Section~\ref{sec2}) but also to derive the previously unknown distribution of the pattern
%    \begin{center}
%    	$\pattern{scale=1}{4}{2/2,3/3}{1/1,1/2,1/3,2/1,3/1,2/2}$
%    \end{center}
\begin{center}
	Nr.\ 66\ =\ \begin{tikzpicture}[scale=0.6, baseline=(current bounding box.center)]
	\foreach \x/\y in {0/0,0/1,2/0,1/0,0/2,1/1}		
	\fill[gray!20] (\x,\y) rectangle +(1,1);
	\draw (0.01,0.01) grid (2+0.99,2+0.99);
	\filldraw (1,1) circle (4pt) ;
	\filldraw (2,2) circle (4pt);
	\end{tikzpicture}
\end{center}
(pattern's number is introduced by us in this paper)   which is not equivalent to any of the patterns in~\cite{Hilmarsson2015Wilf}.

\begin{table}[htbp]
 	{
 		\renewcommand{\arraystretch}{1.7}
 \begin{center}
 		\begin{tabular}{|c|c|c|c|c|c|c|}
            \hline 			
 			{Nr.\ } & {Repr.\ $p$}  & {Generalization}  &  {\ $p_1$ } & {\ $p_2$}  & {\ $p_3$}  & {Distribution}
 			\\[8pt]
 			\hline		\hline
 			$X$ &
 			$\pattern{global scale = 0.8}{1}{1/1}{0/1,1/0}$ &  	
            \raisebox{0.6ex}{\begin{tikzpicture}[scale = 0.4, baseline=(current bounding box.center)]
 			\foreach \x/\y in {0/1,1/0}		
 			\fill[gray!20] (\x,\y) rectangle +(1,1);
 			\draw (0.01,0.01) grid (1+0.99,1+0.99);
 			\node  at (1.6,1.6) {\tiny $p_1$};
 			\filldraw (1,1) circle (4pt) node[above left] {};
 			\end{tikzpicture}}
 	        & {Irreducible} & - & - & Theorem~\ref{th:pattern X}
 			\\[8pt]
 			\hline
 			$Y$ & $\pattern{global scale = 0.8}{1}{1/1}{0/0,0/1,1/0}$ &
            \raisebox{0.3ex}{\begin{tikzpicture}[scale = 0.4, baseline=(current bounding box.center)]
 			\foreach \x/\y in {0/1,1/0,0/0}		
 			\fill[gray!20] (\x,\y) rectangle +(1,1);
 			\draw (0.01,0.01) grid (1+0.99,1+0.99);
 			\node  at (1.6,1.6) {\tiny $p_1$};
 			\filldraw (1,1) circle (4pt) node[above left] {};
 			\end{tikzpicture}}
            & {Any} & - & - & Theorem~\ref{th:pattern Y}
 			\\[8pt]
 			\hline
 			12 & $\pattern{global scale = 0.6}{2}{1/1,2/2}{0/0,0/1,0/2,1/0,2/0}$ & - & - & -
 			& - & Corollary~\ref{cor1}
 			\\[8pt]
 			\hline
 		    13 & $\pattern{global scale = 0.6}{2}{1/1,2/2}{0/0,0/1,0/2,1/0,2/0,2/1,2/2,1/2}$
 		    & {\begin{tikzpicture}[scale = 0.3, baseline=(current bounding box.center)]
 			\foreach \x/\y in {0/0,0/1,0/2,1/0,2/0,2/1,2/2,1/2}		
 			\fill[gray!20] (\x,\y) rectangle +(1,1);
 			\draw (0.01,0.01) grid (2+0.99,2+0.99);
 			\node  at (1.6,1.6) {\tiny $p_1$};
 			\filldraw (1,1) circle (4pt) node[above left] {};
            \filldraw (2,2) circle (4pt) node[above left] {};
 			\end{tikzpicture}}
            & {Any} & - & - & Theorem ~\ref{th:pattern 13}
 		    \\[-10pt]
		    & & & & & & Corollary~\ref{cor2} \\
 		    \hline
 			17 & $\pattern{global scale = 0.6}{2}{1/1,2/2}{0/0,0/1,0/2,1/0,2/0,2/1,1/2}$ & -  & - & - & - & Corollary~\ref{cor3}
 			\\[8pt]
 			\hline
 			19 & $\pattern{global scale = 0.6}{2}{1/1,2/2}{0/1,0/2,1/1,1/2,2/0,2/2}$ &
            \raisebox{0.3ex}{\begin{tikzpicture}[scale = 0.3, baseline=(current bounding box.center)]
 			\foreach \x/\y in {0/1,0/2,1/1,1/2,2/0,2/2}		
 			\fill[gray!20] (\x,\y) rectangle +(1,1);
 			\draw (0.01,0.01) grid (2+0.99,2+0.99);
 			\node  at (2.6,1.6) {\tiny $p_1$};
 			\filldraw (1,1) circle (4pt) node[above left] {};
            \filldraw (2,2) circle (4pt) node[above left] {};
 			\end{tikzpicture}}
            & {Any} & - & - & Theorem~\ref{th:pattern 19}
 			\\[8pt]
 			\hline
 			20 & $\pattern{global scale = 0.6}{2}{1/1,2/2}{0/0,0/1,0/2,1/1,1/2,2/1,2/0}$ &
            \raisebox{0.6ex}{\begin{tikzpicture}[scale = 0.3, baseline=(current bounding box.center)]
 			\foreach \x/\y in {0/0,0/1,0/2,1/1,1/2,2/1,2/0}		
 			\fill[gray!20] (\x,\y) rectangle +(1,1);
 			\draw (0.01,0.01) grid (2+0.99,2+0.99);
 			\node  at (2.6,2.6) {\tiny $p_1$};
            \node  at (1.6,0.5) {\tiny $p_2$};
 			\filldraw (1,1) circle (4pt) node[above left] {};
            \filldraw (2,2) circle (4pt) node[above left] {};
 			\end{tikzpicture}}
             & {Any} & {Any} & - & Theorem~\ref{th:pattern 20}
 			\\[8pt]
 			\hline
 			22 & $\pattern{global scale = 0.6}{2}{1/1,2/2}{0/0,0/1,1/1,1/2,2/0,2/2}$ &
            \raisebox{0.6ex}{\begin{tikzpicture}[scale = 0.3, baseline=(current bounding box.center)]
 			\foreach \x/\y in {0/0,0/1,1/1,1/2,2/0,2/2}		
 			\fill[gray!20] (\x,\y) rectangle +(1,1);
 			\draw (0.01,0.01) grid (2+0.99,2+0.99);
 			\node  at (0.6,2.6) {\tiny $p_1$};
            \node  at (1.6,0.5) {\tiny $p_2$};
            \node  at (2.6,1.6) {\tiny $p_3$};
 			\filldraw (1,1) circle (4pt) node[above left] {};
            \filldraw (2,2) circle (4pt) node[above left] {};
 			\end{tikzpicture}}
            & {Any} & {Any} & {Any} & Theorem~\ref{th:pattern 22}
 			\\[8pt]
 	        \hline
 			28 & $\pattern{global scale = 0.6}{2}{1/1,2/2}{0/0,0/1,1/0,1/2,2/1,2/2}$ &
            {\begin{tikzpicture}[scale = 0.3, baseline=(current bounding box.center)]
 			\foreach \x/\y in {0/0,0/1,1/0,1/2,2/1,2/2}		
 			\fill[gray!20] (\x,\y) rectangle +(1,1);
 			\draw (0.01,0.01) grid (2+0.99,2+0.99);
 			\node  at (1.6,1.6) {\tiny $p_1$};
 			\filldraw (1,1) circle (4pt) node[above left] {};
            \filldraw (2,2) circle (4pt) node[above left] {};
 			\end{tikzpicture}}
            & {Any} & - & - & Theorem~\ref{th:pattern 28}
 			\\[8pt]
 			\hline
 			33 & $\pattern{global scale = 0.6}{2}{1/1,2/2}{0/1,0/2,1/0,2/0,2/1,1/2}$ &
            \raisebox{0.6ex}{\begin{tikzpicture}[scale = 0.3, baseline=(current bounding box.center)]
 			\foreach \x/\y in {0/1,0/2,1/0,2/0,2/1,1/2}		
 			\fill[gray!20] (\x,\y) rectangle +(1,1);
 			\draw (0.01,0.01) grid (2+0.99,2+0.99);
 			\node  at (2.6,2.6) {\tiny $p_1$};
 			\filldraw (1,1) circle (4pt) node[above left] {};
            \filldraw (2,2) circle (4pt) node[above left] {};
 			\end{tikzpicture}} & {Irreducible} & - & - & Theorem~\ref{th:pattern 33}
 			\\[8pt]
 		    \hline
 			66 & $\pattern{global scale = 0.6}{2}{1/1,2/2}{0/0,0/1,0/2,1/0,2/0,1/1}$ & -  & - & - & - & Corollary~\ref{cor4}
 			\\[8pt]
           \hline
 			\cline{1-6}
 		\end{tabular}
\end{center} 	}
 	\caption{Distributions of generalizations of short mesh patterns. Note that pattern $X$ is essentially equivalent to pattern $Z$ in the sense of distribution.}
 	\label{tab-1}
\end{table}

\begin{table}[htbp]
{
		\renewcommand{\arraystretch}{1.6}
	\begin{center}
	\begin{tabular}{|c|c|c|c|c|c|}
	  	\hline {Nr.\ } & {Repr.\ $p$}  & {Generalization}  &  {\ $p_1$ } & {\ $p_2$}  & {Avoidance}
		\\[5pt]
		\hline \hline
%		16 &	$\pattern{scale = 0.6}{2}{1/1,2/2}{0/1,0/2,1/0,2/0}$  & - & - &	- &	{\bf REFERENCE?}
%		 \\[5pt]
%		\hline
		27 &    $\pattern{scale = 0.6}{2}{1/1,2/2}{0/1,0/2,1/0,1/1,2/0,2/2}$ &	
		    \raisebox{0.3ex}{\begin{tikzpicture}[scale = 0.3, baseline=(current bounding box.center)]
 			\foreach \x/\y in {0/1,0/2,1/0,1/1,2/0,2/2}		
 			\fill[gray!20] (\x,\y) rectangle +(1,1);
 			\draw (0.01,0.01) grid (2+0.99,2+0.99);
 			\node  at (0.6,0.6) {\tiny $p_1$};
            \node  at (2.6,1.6) {\tiny $p_2$};
 			\filldraw (1,1) circle (4pt) node[above left] {};
            \filldraw (2,2) circle (4pt) node[above left] {};
 			\end{tikzpicture}}  & {Any}  & {Any}
		&	Theorem~\ref{th:pattern 27}
		\\[5pt]
		\hline

		28 & $\pattern{scale = 0.6}{2}{1/1,2/2}{0/0,0/1,1/0,1/2,2/1,2/2}$	 & 	
		    \raisebox{0.3ex}{\begin{tikzpicture}[scale = 0.3, baseline=(current bounding box.center)]
 			\foreach \x/\y in {0/0,0/1,1/0,1/2,2/1,2/2}		
 			\fill[gray!20] (\x,\y) rectangle +(1,1);
 			\draw (0.01,0.01) grid (2+0.99,2+0.99);
 			\node  at (1.6,1.6) {\tiny $p_1$};
            \node  at (2.6,0.6) {\tiny $p_2$};
 			\filldraw (1,1) circle (4pt) node[above left] {};
            \filldraw (2,2) circle (4pt) node[above left] {};
 			\end{tikzpicture}} & {Any} & {Any}	
	    &   Theorem~\ref{th:pattern 28-2}
		\\[5pt]
		\hline
		30 &   $\pattern{scale = 0.6}{2}{1/1,2/2}{0/1,0/2,1/0,1/1,1/2,2/0,2/1}$  &			    \raisebox{0.6ex}{\begin{tikzpicture}[scale = 0.3, baseline=(current bounding box.center)]
 			\foreach \x/\y in {0/1,0/2,1/0,1/1,1/2,2/0,2/1}		
 			\fill[gray!20] (\x,\y) rectangle +(1,1);
 			\draw (0.01,0.01) grid (2+0.99,2+0.99);
 			\node  at (2.6,2.6) {\tiny $p_1$};
 			\filldraw (1,1) circle (4pt) node[above left] {};
            \filldraw (2,2) circle (4pt) node[above left] {};
 			\end{tikzpicture}}  &	
		{Two general classes} &  -  &   Theorem~\ref{th:pattern 30}
		\\[5pt]	
		\cline{1-6}
	\end{tabular}
	\end{center}
}
	\caption{Avoidance for generalizations of short mesh patterns.}
 	\label{tab-2}
\end{table}

For a more sophisticated example illustrating the power of our results in this paper, suppose that one wants to find the distribution of the mesh pattern in Figure~\ref{sophist-ex-mesh}. Approaching this problem directly is probably not doable. However, one can see that the elements $a$ and $b$ give a mesh pattern of the form in Figure~\ref{pic-thm-pat-19}, so that Theorem~\ref{th:pattern 19} can be
applied with $p_1$ there given by the elements $c$ and $d$, which give a mesh
pattern of the form in Figure~\ref{pic-thm-pat-28}, so that Theorem~\ref{th:pattern 28} can be applied. Finally, $p_1$ in Theorem~\ref{th:pattern 28} in our example is nothing else but the mesh pattern Nr.\ 66, so Corollary~\ref{cor3} can be applied.  This will result in the distribution of the mesh pattern in Figure~\ref{sophist-ex-mesh} be
\begin{small}
$$F(x)\left(1+x(F(x)-1)\left(\frac{1}{1+x^2F(x)\left(F(x)-1-qx\sum_{n=1}^{\infty}\prod_{i=1}^{n-1}(q+i)x^n \right)}-1\right)\right), $$
\end{small}
where $F(x):=\sum_{n\geq 0}n!x^n$.
%\begin{figure}
%\begin{center}
%\vspace{3mm}
%\noindent\includegraphics[scale=0.07]{mesh-ex.jpg}
%\vspace{3mm}
%\vspace{-2mm}
%\caption{A mesh pattern of length 6}\label{sophist-ex-mesh}
%\end{center}
%\end{figure}
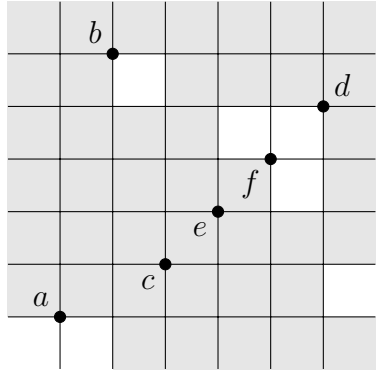
\begin{figure}[!ht]
	\begin{center}
		\begin{tikzpicture}[scale=0.7, baseline=(current bounding box.center)]
		\foreach \x/\y in {0/1,0/2,0/3,0/4,0/5,0/6,
			1/1,1/2,1/3,1/4,1/5,1/6,
			2/0,2/1,2/2,2/3,2/4,2/6,
			3/0,3/1,3/2,3/3,3/4,3/5,3/6,
			4/0,4/1,4/2,4/3,4/5,4/6,
			5/0,5/1,5/2,5/5,5/6,
			6/0,6/2,6/3,6/4,6/5,6/6}
		\fill[gray!20] (\x,\y) rectangle +(1,1);
		\draw (0.01,0.01) grid (6+0.99,6+0.99);
		\filldraw (1,1) circle (3pt) node[above left] {$a$};
		\filldraw (2,6) circle (3pt) node[above left] {$b$};
		\filldraw (3,2) circle (3pt) node[below left] {$c$};
		\filldraw (6,5) circle (3pt) node[above right] {$d$};
		\filldraw (4,3) circle (3pt) node[below left] {$e$};
		\filldraw (5,4) circle (3pt) node[below left] {$f$};

		\end{tikzpicture}
\caption{A mesh pattern of length 6}\label{sophist-ex-mesh}
	\end{center}
\end{figure}

In Tables~\ref{tab-1} and~\ref{tab-2} we give references to our enumerative results related to distribution and avoidance, respectively. Moreover, in Table~\ref{tab-3} we give references to our distribution and avoidance results on certain generalizations of short mesh patterns. We note that the patterns $p_1$, $p_2$, $p_3$ in Tables~\ref{tab-1}--\ref{tab-3} can be empty, in which case one needs to substitute the generating functions $A_{p_i}(x)$ and $F_{p_i}(x,q)$ by 0 and $qF(x)$, respectively, in our results. Indeed, one can assume that any permutation contains exactly one occurrence of the empty pattern, which makes the substitutions work. Also, in the case of empty $p_1$, one needs to set $k=1$ in our results related to  the pattern Nr.\ 34 (in Theorems~\ref{thm-pat-34} and \ref{th:pattern 34-2}). In this way, one can obtain  any previous results in \cite{Hilmarsson2015Wilf,SZ} related to the patterns appearing in this paper.

\begin{table}[htbp]
{
		\renewcommand{\arraystretch}{1.7}
	\begin{center}
	\begin{tabular}{|c|c|c|c|c|c|}
	  	\hline {Nr.\ } & {Repr.\ $p$}  & {Generalization}  &  {\ $p_1$ } & {\ $p_2$} &  {Reference}
		\\[5pt]
		\hline \hline
		 33 & $\pattern{scale = 0.6}{2}{1/1,2/2}{0/1,0/2,1/0,2/0,2/1,1/2}$ & \raisebox{0.6ex}{\begin{tikzpicture}[scale = 0.3, baseline=(current bounding box.center)]
 			\foreach \x/\y in {0/1,0/2,1/0,2/0,2/1,1/2}		
 			\fill[gray!20] (\x,\y) rectangle +(1,1);
 			\draw (0.01,0.01) grid (2+0.99,2+0.99);
 			\node  at (2.6,2.6) {\tiny $p_1$};
            \node  at (1.5,1.65) {\tiny $\iddots$};
 			\filldraw (1,1) circle (4pt) node[above left] {};
            \filldraw (2,2) circle (4pt) node[above left] {};
 			\end{tikzpicture}} & {Irreducible} & - & Theorem~\ref{th:pattern 33-2}
 			\\[5pt]
            \hline
		34 &  $\pattern{scale = 0.6}{2}{1/1,2/2}{0/0,0/1,1/0,1/1,1/2,2/1,2/2}$ &
		\raisebox{0.6ex}{\begin{tikzpicture}[scale = 0.3, baseline=(current bounding box.center)]
 			\foreach \x/\y in {0/0,0/1,1/0,1/1,1/2,2/1,2/2}		
 			\fill[gray!20] (\x,\y) rectangle +(1,1);
 			\draw (0.01,0.01) grid (2+0.99,2+0.99);
            \node  at (1.5,1.5) {\tiny $p_1$};
 			\filldraw (1,1) circle (4pt) node[above left] {};
            %\filldraw (2,2) circle (4pt) node[above left] {};
 			\end{tikzpicture}}
		&  {Any} &	-  & 	Theorem~\ref{thm-pat-34}
        \\[5pt]
        \hline
		34 &  $\pattern{scale = 0.6}{2}{1/1,2/2}{0/0,0/1,1/0,1/1,1/2,2/1,2/2}$ & \raisebox{0.3ex}{\begin{tikzpicture}[scale = 0.3, baseline=(current bounding box.center)]
 			\foreach \x/\y in {0/0,0/1,1/0,1/1,1/2,2/1,2/2}		
 			\fill[gray!20] (\x,\y) rectangle +(1,1);
 			\draw (0.01,0.01) grid (2+0.99,2+0.99);
            \node  at (1.5,1.5) {\tiny $p_1$};
            \node at (2.5,0.5) {\tiny $p_2$};
 			\filldraw (1,1) circle (4pt) node[above left] {};
           % \filldraw (2,2) circle (4pt) node[above left] {};
 			\end{tikzpicture}}
		&  {Any} &	{Any}  & Theorem~\ref{th:pattern 34-2}
        \\[5pt]
		\cline{1-6}
	\end{tabular}
	\end{center}
}
	\caption{Generalizations of short mesh patterns allowing replacement of $\tau=12$ either by an increasing permutation or by a permutation beginning with the smallest element with all boxes shaded.  
The distributions are given for Nr.\ 33, and 34 with $p_1$ but without $p_2$, and the avoidance for Nr.\ 34 with $p_1$ and $p_2$.}
 	\label{tab-3}
\end{table}

In this paper, we need the following result.

\begin{thm}[{\cite[Theorem 1.1]{SZ}}]\label{thm-length-1} Let
 $$F(x,q) =\sum_{n\geq 0}x^n\sum_{\pi\in S_n}q^{\pattern{scale=0.5}{1}{1/1}{0/1,1/0}(\pi)}=\sum_{n\geq 0}x^n\sum_{\pi\in S_n}q^{\pattern{scale=0.5}{1}{1/1}{0/0,1/1}(\pi)}$$ and let $A(x)$ be the generating function for $S(\pattern{scale=0.5}{1}{1/1}{0/1,1/0}\ )=S(\pattern{scale=0.5}{1}{1/1}{0/0,1/1}\ )$. Then,
 $$A(x)=\frac{F(x)}{1+xF(x)},\ \ \ \ \ F(x,q)=\frac{F(x)}{1+x(1-q)F(x)}.$$
\end{thm}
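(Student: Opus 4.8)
The plan is to prove the two formulas for the pattern $Z_1 = \pattern{scale=0.5}{1}{1/1}{0/1,1/0}$ and then deduce the identical formulas for $Z_2 = \pattern{scale=0.5}{1}{1/1}{0/0,1/1}$ by symmetry. First I would unwind the statistic: an element $\pi_i$ of $\pi=\pi_1\cdots\pi_n$ is an occurrence of $Z_1$ exactly when its above-left box and its below-right box are empty, i.e. $\pi_j<\pi_i$ for all $j<i$ and $\pi_j>\pi_i$ for all $j>i$; equivalently $\pi_i$ is simultaneously a left-to-right maximum and a right-to-left minimum. I will call such an $i$ an \emph{anchor}, and a quick count of values shows that at an anchor necessarily $\pi_i=i$ and $\{\pi_1,\ldots,\pi_i\}=\{1,\ldots,i\}$. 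The complement map $\pi_i\mapsto n+1-\pi_i$ is an involution on $S_n$ that swaps left-to-right maxima with left-to-right minima and right-to-left minima with right-to-left maxima, hence carries anchors of $Z_1$ to the occurrence positions of $Z_2$; this proves the two generating functions coincide and lets me work only with $Z_1$.

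The combinatorial heart is a decomposition along anchors. I would first record the additivity of anchors under direct sums: the anchors of $\sigma\oplus\rho$ are the anchors of $\sigma$ together with the shifted anchors of $\rho$, and the cut point $|\sigma|$ is itself an anchor precisely when $\sigma$ ends in its maximum. Let $D(x)$ be the generating function for anchor-free permutations (the empty permutation included) and let $B(x)$ count permutations that end in their maximum and have no anchor before the last position. Deleting the terminal maximum is a size-decreasing bijection from the latter class onto anchor-free permutations, so $B(x)=xD(x)$. Reading off the anchors $i_1<\cdots<i_m$ of a permutation cuts it uniquely as $\gamma_1\oplus\cdots\oplus\gamma_m\oplus\delta$, where each $\gamma_j$ ends in its maximum and has no interior anchor (so $\gamma_j$ is counted by $B$ and carries exactly one anchor, weight $q$) and the tail $\delta$ is anchor-free (counted by $D$, weight $1$). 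This yields
$$F_{Z_1}(x,q)=\frac{D(x)}{1-qB(x)}=\frac{D(x)}{1-qx\,D(x)}.$$

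Finally I would pin down $D(x)$. Setting $q=1$ in the displayed identity gives $F(x)=D(x)/(1-xD(x))$, where $F(x)=\sum_{n\ge0}n!\,x^n$ is the $q=1$ specialization; solving, $D(x)=F(x)/(1+xF(x))$. Since avoidance is the case $q=0$, we obtain $A(x)=F_{Z_1}(x,0)=D(x)=F(x)/(1+xF(x))$ immediately. Substituting $D(x)=F(x)/(1+xF(x))$ back into the displayed formula and simplifying gives $F_{Z_1}(x,q)=F(x)/\bigl(1+x(1-q)F(x)\bigr)$, as claimed, and the same holds for $Z_2$ by the complement symmetry.

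The step I expect to demand the most care is verifying that the anchor decomposition is a genuine bijection: one must check that no spurious anchor is created or destroyed at a junction. This rests entirely on the additivity of anchors under $\oplus$ together with the fact that the "ends in its maximum'' condition on a left block is exactly what promotes a cut point to an anchor (and, dually, that an anchor-free tail cannot end in its maximum). Once that bookkeeping is confirmed, the generating-function manipulations are routine.
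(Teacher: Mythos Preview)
The paper does not prove this theorem; it is quoted from \cite{SZ} as a known result and used as a tool in later sections, so there is no ``paper's own proof'' to compare against here.

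That said, your argument is correct. The identification of occurrences of $Z_1$ with positions that are simultaneously left-to-right maxima and right-to-left minima (forcing $\pi_i=i$ and $\{\pi_1,\dots,\pi_i\}=\{1,\dots,i\}$) is right, the complement bijection handles $Z_2$, and the anchor decomposition $\pi=\gamma_1\oplus\cdots\oplus\gamma_m\oplus\delta$ is a genuine bijection for the reasons you outline. The only point worth making explicit in a final write-up is the converse direction: given blocks $\gamma_j$ (ending in their maxima, no interior anchor) and an anchor-free tail $\delta$, the resulting permutation has \emph{exactly} the block-endpoints as anchors, because anchors are local to each summand in a direct sum and an anchor-free $\delta$ in particular cannot end in its maximum. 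Once that is spelled out, the identity $F_{Z_1}(x,q)=D(x)/(1-qxD(x))$ and the extraction of $D(x)$ via $q=1$ are routine, and your algebra checks out.
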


This paper is organized as follows. In Section~\ref{sec2} we present  distribution and avoidance results for certain mesh patterns derived from the pattern $Y$. In Section~\ref{sec3} we study the distribution and avoidance  for certain patterns derived from the patterns Nr.\ 13, 19, 20, 22, and 28. Section~\ref{sec4} is dedicated to the  distribution and avoidance  for certain mesh patterns derived from the pattern $X$. In Section~\ref{sec5} we deal with avoidance results for certain mesh patterns  derived from the patterns Nr.\ 27, 28, 30, and 34. Finally, we provide some concluding remarks in Section~\ref{final-sec}.

\section{A generalization of the pattern $Y$}\label{sec2}
In this section, we consider a generalization of the pattern $Y = \pattern{scale = 0.8}{1}{1/1}{0/0,0/1,1/0}$. As an application of our general results, we will find the distributions of the following patterns:\\

Nr.~12 = $\pattern{scale = 0.6}{2}{1/1,2/2}{0/0,0/1,0/2,1/0,2/0}$, \quad
Nr.~13 = $\pattern{scale = 0.6}{2}{1/1,2/2}{0/0,0/1,0/2,1/0,2/0,2/1,2/2,1/2}$, \quad
Nr.~17 = $\pattern{scale = 0.6}{2}{1/1,2/2}{0/0,0/1,0/2,1/0,2/0,2/1,1/2}$, \quad
Nr.~66 = $\pattern{scale=0.6}{2}{1/1,2/2}{0/0,0/1,0/2,1/0,2/0,1/1}$.\\

\begin{thm}\label{th:pattern Y}
Suppose that $p$ is the pattern shown in Figure~\ref{pic-thm-pat-Y}, where $p_1$ is any mesh pattern, and the label $a$ is to be ignored. Then,
\begin{align}		
	A_p(x)& =(1-x)F(x)+xA_{p_1}(x),\label{avoidance-patt-Y-2}\\[6pt]
	F_p(x,q)& =(1-x)F(x)+xF_{p_1}(x,q).\label{dis-pattern-Y-2}
\end{align}
\end{thm}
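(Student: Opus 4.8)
The plan is to decode what an occurrence of $p$ means combinatorially, reduce it to an occurrence of $p_1$ inside a suffix, and then translate the resulting decomposition of $S_n$ into generating functions. First I would analyze the position forced by the point coming from $Y$. Since the three boxes $\boks{0}{0}$, $\boks{0}{1}$, $\boks{1}{0}$ are shaded, the entry $\pi_i$ playing the role of that point can have no element to its left (boxes $\boks{0}{0}$ and $\boks{0}{1}$ together forbid anything in positions before $i$) and no smaller element to its right (box $\boks{1}{0}$). Hence $i=1$ and $\pi_1$ is smaller than every other entry, so $\pi_1 = 1$. In particular, if $\pi_1 \neq 1$ then $p$ cannot occur in $\pi$, i.e.\ $p(\pi) = 0$.

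Next I would set up the correspondence when $\pi_1 = 1$. In this case the box $\boks{1}{1}$ of $Y$ --- the region strictly to the upper right of the point --- consists of exactly the entries $\pi_2, \dots, \pi_n$, since all of them lie in later positions and have larger values than $\pi_1 = 1$. An occurrence of $p$ is then the point $\pi_1$ together with an occurrence of $p_1$ sitting inside this box, and because $\pi_1$ lies strictly below and to the left of the entire box it can never fall into a shaded cell of $p_1$. Passing to the reduction $\sigma := \red(\pi_2 \cdots \pi_n) \in S_{n-1}$ preserves both relative order and relative position, so occurrences of $p_1$ inside the box correspond bijectively to occurrences of $p_1$ in $\sigma$. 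Therefore $p(\pi) = p_1(\sigma)$ whenever $\pi_1 = 1$, and the map $\pi \mapsto \sigma$ is a bijection from $\{\pi \in S_n : \pi_1 = 1\}$ onto $S_{n-1}$.

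With this I would translate to generating functions by splitting $S_n$ according to whether $\pi_1 = 1$. The $n! - (n-1)!$ permutations with $\pi_1 \neq 1$ each contribute $q^0 = 1$, while the $(n-1)!$ permutations with $\pi_1 = 1$ contribute $\sum_{\sigma \in S_{n-1}} q^{p_1(\sigma)}$. Summing $x^n$ over $n \geq 1$ and adding the empty permutation for $n = 0$ gives
\[
F_p(x,q) = 1 + \bigl(F(x) - 1 - xF(x)\bigr) + x F_{p_1}(x,q),
\]
where I have used $\sum_{n\ge1} n! x^n = F(x) - 1$ and $\sum_{n\ge1}(n-1)!\,x^n = xF(x)$; this simplifies to \eqref{dis-pattern-Y-2}. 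Setting $q = 0$ and using $F_{p_1}(x,0) = A_{p_1}(x)$ then yields \eqref{avoidance-patt-Y-2}.

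The main obstacle here is conceptual rather than computational: one must verify carefully that the decomposition is clean --- that the $Y$-point is genuinely forced to be $\pi_1 = 1$, that the box $\boks{1}{1}$ captures precisely the suffix $\pi_2\cdots\pi_n$, and that $\pi_1$ never interferes with the shading of $p_1$. Once this is pinned down, the generating-function bookkeeping is routine.
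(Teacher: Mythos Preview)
Your proof is correct and follows essentially the same approach as the paper: both arguments rest on the observation that the shading of $Y$ forces the element $a$ to be $\pi_1=1$, so that occurrences of $p$ in $\pi$ correspond bijectively to occurrences of $p_1$ in the reduced suffix. The only cosmetic difference is that the paper first derives the avoidance identity $A_p(x)+x\bigl(F(x)-A_{p_1}(x)\bigr)=F(x)$ and then the distribution identity, whereas you split directly on whether $\pi_1=1$ and obtain $F_p(x,q)$ first, recovering $A_p(x)$ by setting $q=0$; you are also more explicit in spelling out why $a$ must equal $\pi_1=1$ and why $\pi_1$ cannot violate any shaded cell of $p_1$.
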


\begin{figure}[!ht]
\begin{center}
    \begin{tikzpicture}[scale=0.8, baseline=(current bounding box.center)]
	\foreach \x/\y in {0/0,0/1,1/0}		
		\fill[gray!20] (\x,\y) rectangle +(1,1);
	\draw (0.01,0.01) grid (1+0.99,1+0.99);
    \filldraw (1,1) circle (3pt) node[below left] {$a$};
	\node  at (1.5,1.5) {$p_1$};
	\end{tikzpicture}
\caption{Related to the proof of Theorem~\ref{th:pattern Y}.}\label{pic-thm-pat-Y}
\end{center}
\end{figure}
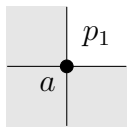

\begin{proof}
Any permutation counted by $F(x)$ either avoids $p$, which is counted by $A_p(x)$, or contains at least one occurrence of $p$. The generating function for the latter case is  $x(F(x)-A_{p_1}(x))$. Indeed, for any occurrence of $p$, the element $a$ in Figure~\ref{pic-thm-pat-Y} is the same.  Now the North East box in the figure must contain at least one occurrence of $p_1$, which is counted by $F(x)-A_{p_1}(x)$, and the element $a$ contributes the factor of $x$. This leads  to
\begin{equation}\label{av-pattern-Y}
A_p(x) +x(F(x)-A_{p_1}(x))=F(x).
\end{equation}

For the distribution, we have the following functional equation:
\begin{equation}\label{dis-pattern-Y}
A_p(x) +x(F_{p_1}(x,q)-A_{p_1}(x))=F_p(x,q).
\end{equation}
Our proof of \eqref{dis-pattern-Y} is essentially the same as that  in the avoidance case.
In particular, the contribution of the North East box is $F_{p_1}(x,q)-A_{p_1}(x)$, since every occurrence of $p_1$ there, along with the element $a$, will give an occurrence of $p$, and all occurrences of $p$ are obtained in this way.
The formulas \eqref{avoidance-patt-Y-2} and \eqref{dis-pattern-Y-2} now follow from the formulas \eqref{av-pattern-Y} and \eqref{dis-pattern-Y}, respectively. This completes the proof.
\end{proof}

The following results follow from Theorem~\ref{th:pattern Y}.

\begin{coro}[{\cite[Theorem 2.4]{SZ}}]\label{cor1} For the pattern Nr.~$12$ = $p=\pattern{scale=0.6}{2}{1/1,2/2}{0/0,0/1,0/2,1/0,2/0}$, we have
	\begin{align*}		
A_p(x) & =(1-x)F(x)+x, \\[6pt]
F_p(x,q) & =(1-x)F(x)+xF(qx).
\end{align*}
\end{coro}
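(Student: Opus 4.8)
The plan is to specialize Theorem~\ref{th:pattern Y} to the case where the inserted pattern $p_1$ is \emph{empty}. As the authors explain in the paragraph following Table~\ref{tab-3}, an empty pattern is handled by the conventions $A_{p_1}(x) = 0$ and $F_{p_1}(x,q) = qF(x)$, the latter reflecting the idea that every permutation of $S_n$ contains exactly one occurrence of the empty pattern. I would first verify that pattern Nr.~12, namely $p = \pattern{scale=0.5}{2}{1/1,2/2}{0/0,0/1,0/2,1/0,2/0}$, is indeed the pattern $P$ of Figure~\ref{pic-thm-pat-Y} with $p_1$ taken to be empty. Geometrically, the generalization of $Y$ places the inserted pattern $p_1$ in the North East box $(1,1)$ above and to the right of the single point $a$; shading the boxes $\boks{0}{0}, \boks{0}{1}, \boks{1}{0}$ of $Y$ and then ``erasing'' $p_1$ from the top-right region should leave exactly the shading of Nr.~12.

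Next I would substitute into the two conclusions of Theorem~\ref{th:pattern Y}. For avoidance, equation~\eqref{avoidance-patt-Y-2} gives
$$
A_p(x) = (1-x)F(x) + x A_{p_1}(x) = (1-x)F(x) + x \cdot 0 = (1-x)F(x),
$$
but this disagrees with the claimed formula $A_p(x) = (1-x)F(x) + x$. The discrepancy is the constant term: the empty permutation (and, more precisely, the bookkeeping of the $n=0$ term) forces a correction of $+x$ rather than $+0$. So the subtle point is that the ``$A_{p_1}(x) = 0$'' convention must be applied with care here. I expect the cleanest route is to reread the functional equation~\eqref{av-pattern-Y} directly: with $p_1$ empty, the North East box ``contains an occurrence of $p_1$'' automatically whenever it is nonempty of permutations, so the count $F(x) - A_{p_1}(x)$ of permutations having at least one occurrence of $p_1$ becomes $F(x) - 1$ (every nonempty permutation has the empty occurrence, and only the length-$0$ permutation does not support the configuration), yielding $A_p(x) + x(F(x)-1) = F(x)$, i.e.\ $A_p(x) = (1-x)F(x) + x$. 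I would present this as the honest derivation rather than a blind substitution.

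For the distribution, I would use equation~\eqref{dis-pattern-Y} with the convention $F_{p_1}(x,q) = qF(x)$ and $A_{p_1}$ handled as above. The functional equation reads
$$
A_p(x) + x\bigl(F_{p_1}(x,q) - A_{p_1}(x)\bigr) = F_p(x,q).
$$
Here the combinatorial meaning is that an occurrence of $p_1$ in the North East box is simply a choice of a subpermutation sitting entirely in that box, and since $p_1$ is empty each such placement contributes a factor tracking the size of the box-content by $q$. Tracking the statistic carefully, the contribution of the North East region with the empty pattern is $F(qx)$ rather than $qF(x)$: the element $a$ together with everything placed in the box forms the object counted, and replacing $x$ by $qx$ records each element of the box-content as an occurrence. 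Thus I would argue $F_p(x,q) = (1-x)F(x) + xF(qx)$, matching the corollary.

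The main obstacle is \emph{not} the algebra but pinning down the correct reading of the empty-pattern conventions so that the constant-term correction ($+x$ instead of $+0$) and the substitution $x \mapsto qx$ (instead of a bare factor of $q$) both emerge consistently. I would therefore organize the proof as: (i) identify Nr.~12 as the $p_1 = \emptyset$ specialization of Figure~\ref{pic-thm-pat-Y}; (ii) rerun the two functional equations~\eqref{av-pattern-Y} and~\eqref{dis-pattern-Y} with the empty-pattern interpretation made explicit, so that ``every nonempty permutation contains exactly one empty occurrence'' is used to justify the $q$-weighting of the box content; and (iii) solve the resulting equations for $A_p(x)$ and $F_p(x,q)$. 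Steps (i) and (iii) are routine; the care required in step (ii) is the heart of the matter.
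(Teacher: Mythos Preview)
Your step (i) is incorrect, and this misidentification is the source of all the ``convention'' difficulties you encounter in step (ii). Pattern Nr.~12 has \emph{two} points; if you insert the empty pattern into the North East box of $Y$, you add no points at all and recover $Y$ itself (a length-$1$ pattern). Indeed, applying the paper's empty-pattern conventions $A_{p_1}(x)=0$ and $F_{p_1}(x,q)=qF(x)$ in Theorem~\ref{th:pattern Y} yields $A_p(x)=(1-x)F(x)$ and $F_p(x,q)=(1-x+qx)F(x)$, which are precisely the avoidance and distribution of $Y$, not of Nr.~12.

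The correct specialization, and the one the paper uses, takes $p_1=\pattern{scale=0.5}{1}{1/1}{}$, the length-$1$ pattern with no shading. Inserting this into box $(1,1)$ of $Y$ produces a length-$2$ pattern whose shading is exactly $\{(0,0),(0,1),(0,2),(1,0),(2,0)\}$, i.e.\ Nr.~12. For this $p_1$, every element of a permutation is an occurrence, so a permutation of length $n$ has $n$ occurrences; hence $A_{p_1}(x)=1$ (only the empty permutation avoids it) and $F_{p_1}(x,q)=\sum_{n\ge 0} n!\,q^n x^n = F(qx)$. Substituting these directly into \eqref{avoidance-patt-Y-2} and \eqref{dis-pattern-Y-2} gives the corollary with no constant-term anomalies and no need to reinterpret the functional equations. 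Your ad hoc fixes in step (ii)---requiring the North East box to be ``nonempty'' and replacing $x$ by $qx$ there---are in fact exactly what $p_1=\pattern{scale=0.5}{1}{1/1}{}$ encodes; you rediscovered the right $p_1$ without recognizing it.
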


\begin{proof} The mesh pattern $p$ is obtained from the pattern $Y$ by inserting  the pattern $p_1=\pattern{scale=0.8}{1}{1/1}{}$. It is easy to see that $A_{p_1}(x)=1$ and $F_{p_1}(x,q)=F(qx)$.
After substituting these into \eqref{avoidance-patt-Y-2} and \eqref{dis-pattern-Y-2}, we obtain the desired result. \end{proof}

\begin{coro}[{\cite[Theorem 2.5]{SZ}}]\label{cor2} For the pattern Nr.~$13$ = $p=\pattern{scale=0.6}{2}{1/1,2/2}{0/0,0/1,2/0,2/2,1/0,0/2,1/2,2/1}$, we have
\begin{align*}		
A_p(x)& =(1-x^2)F(x),\\[6pt]
F_p(x,q)& =(1-x^2+qx^2)F(x).
\end{align*}
\end{coro}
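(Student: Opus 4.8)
The plan is to recognize Nr.~13 as a special case of Theorem~\ref{th:pattern Y} and then to supply the two ingredients $A_{p_1}(x)$ and $F_{p_1}(x,q)$ by a short direct count. First I would identify the inserted pattern $p_1$ by comparing shadings: the three boxes $(0,0),(0,1),(1,0)$ surrounding the lower point of Nr.~13 reproduce exactly the shading of $Y$, while the four boxes $(1,1),(1,2),(2,1),(2,2)$ lying to the North-East of that point are precisely the four sub-boxes of the unique unshaded box of $Y$, into which $p_1$ is inserted. Reading off which of those four are shaded (all but $(1,1)$) shows that $p_1=\pattern{scale=0.8}{1}{1/1}{0/1,1/0,1/1}$, the single point whose North-West, South-East, and North-East boxes are shaded and whose South-West box is free.

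Next I would compute $A_{p_1}(x)$ and $F_{p_1}(x,q)$ for this $p_1$ directly from the definition. An entry $\pi_i$ of $\pi\in S_n$ is an occurrence of $p_1$ exactly when nothing lies to its South-East or North-East, which forces $i=n$ (so $\pi_i$ is the final entry), and nothing lies to its North-West, which forces $\pi_i$ to exceed all earlier entries (so $\pi_n=n$). Hence $p_1$ occurs in $\pi$ if and only if $\pi_n=n$, and in that case it occurs exactly once, since there is only one final entry. As precisely $(n-1)!$ permutations in $S_n$ end in their largest value, the generating function for permutations containing (a single occurrence of) $p_1$ is $\sum_{n\ge 1}(n-1)!\,x^n=xF(x)$, which yields
\[
A_{p_1}(x)=F(x)-xF(x)=(1-x)F(x),\qquad F_{p_1}(x,q)=(1-x)F(x)+q\,xF(x)=(1-x+qx)F(x).
\]

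Finally I would substitute these into \eqref{avoidance-patt-Y-2} and \eqref{dis-pattern-Y-2}, obtaining $A_p(x)=(1-x)F(x)+x(1-x)F(x)=(1-x^2)F(x)$ and $F_p(x,q)=(1-x)F(x)+x(1-x+qx)F(x)=(1-x^2+qx^2)F(x)$, as claimed. There is no genuine obstacle here; the only point requiring care is the correct reading of $p_1$ off the picture together with the observation that its occurrence is necessarily unique, which is exactly what makes the clean enumeration by $(n-1)!$ legitimate and feeds cleanly into Theorem~\ref{th:pattern Y}.
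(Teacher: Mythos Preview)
Your argument is correct and follows the same approach as the paper: identify $p_1=\pattern{scale=0.8}{1}{1/1}{0/1,1/0,1/1}$, compute $A_{p_1}(x)=(1-x)F(x)$ and $F_{p_1}(x,q)=(1-x+qx)F(x)$, and substitute into Theorem~\ref{th:pattern Y}. In fact your derivation is more careful than the paper's, which contains a copy-paste slip (the sentence ``$A_{p_1}(x)=1$ and $F_{p_1}(x,q)=F(qx)$'' is left over from Corollary~\ref{cor1}) before stating the correct formulas.
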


\begin{proof} The mesh pattern $p$ is obtained from the pattern $Y$ by inserting  the pattern $p_1=\pattern{scale=0.8}{1}{1/1}{0/1,1/0,1/1}$. It is easy to see that $A_{p_1}(x)=1$ and $F_{p_1}(x,q)=F(qx)$.
Together with  $A_{p_1}(x)=F(x)-xF(x)$, and $F_{p_1}(x,q)=F(x)-xF(x)+ qxF(x)$, we have the desired result. \end{proof}

\begin{coro}[{\cite[Theorem 3.2]{SZ}}]\label{cor3} For the pattern Nr.~$17$ = $p=\pattern{scale = 0.6}{2}{1/1,2/2}{0/1,1/2,0/0,2/0,1/0,0/2,2/1}$, we have
$$F_p(x,q)=\left(1-x + \frac{x}{1+x(1-q)F(x)}\right)F(x).$$
Also, $A_p(x)=F_p(x,0)$.
\end{coro}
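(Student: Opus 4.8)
The plan is to identify pattern Nr.~17 as an instance of the pattern $Y$ with a suitable choice of $p_1$, and then apply Theorem~\ref{th:pattern Y} directly. First I would examine the shading of Nr.~17, namely $\pattern{scale = 0.6}{2}{1/1,2/2}{0/0,0/1,0/2,1/0,2/0,2/1,1/2}$, and compare it to the three boxes $\{\boks{0}{0},\boks{0}{1},\boks{1}{0}\}$ of $Y$ relative to its single dot at $(1,1)$. The point is that Nr.~17 should be obtainable from $Y$ by inserting into the North East box of $Y$ a length-$1$ mesh pattern $p_1$ carrying exactly the residual shading. So the first step is to read off which of the four boxes surrounding the upper dot of $p_1$ are shaded in Nr.~17; I expect to find that $p_1 = \pattern{scale=0.8}{1}{1/1}{0/1,1/0}\ = Z$ (up to the equivalence noted in Table~\ref{tab-1} between $X$ and $Z$), i.e.\ the length-$1$ pattern whose distribution is governed by Theorem~\ref{thm-length-1}.

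Once $p_1$ is identified as the pattern of Theorem~\ref{thm-length-1}, the second step is purely a substitution. Theorem~\ref{thm-length-1} gives
\[
F_{p_1}(x,q)=\frac{F(x)}{1+x(1-q)F(x)},\qquad A_{p_1}(x)=\frac{F(x)}{1+xF(x)}.
\]
Substituting $F_{p_1}(x,q)$ into the distribution formula \eqref{dis-pattern-Y-2} of Theorem~\ref{th:pattern Y}, namely $F_p(x,q)=(1-x)F(x)+xF_{p_1}(x,q)$, yields
\[
F_p(x,q)=(1-x)F(x)+\frac{xF(x)}{1+x(1-q)F(x)}=\left(1-x+\frac{x}{1+x(1-q)F(x)}\right)F(x),
\]
which is exactly the claimed expression. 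For the avoidance statement, I would either substitute $A_{p_1}(x)$ into \eqref{avoidance-patt-Y-2}, or simply observe that avoidance is the specialization $q=0$ of the distribution, since $F_{p_1}(x,0)=A_{p_1}(x)$ and hence $F_p(x,0)=A_p(x)$; setting $q=0$ in the displayed formula gives $A_p(x)=F_p(x,0)$ as asserted.

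The only genuinely nontrivial step is the first one: correctly verifying that Nr.~17 arises from $Y$ with $p_1$ equal to the Theorem~\ref{thm-length-1} pattern. This requires checking that the union of the three shaded boxes of $Y$ (placed around the lower-left dot) together with the shaded boxes of the inserted $p_1$ (placed around the upper dot) reproduces precisely the seven shaded boxes of Nr.~17, with no box shaded twice or left out. I expect this bookkeeping to be the main obstacle, since one must be careful about how the coordinate system of the inserted length-$1$ pattern embeds into the North East box of $Y$, and about the equivalence (mentioned in the caption of Table~\ref{tab-1}) between the patterns $X$ and $Z$ used in Theorem~\ref{thm-length-1}. After the identification is secured, everything else is a mechanical appeal to the two theorems already proved.
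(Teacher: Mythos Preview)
Your approach is correct and essentially identical to the paper's: identify Nr.~17 as $Y$ with $p_1=\pattern{scale=0.8}{1}{1/1}{0/1,1/0}$ inserted in the North East box, then invoke Theorem~\ref{thm-length-1} for $F_{p_1}(x,q)$ and substitute into \eqref{dis-pattern-Y-2}. One small slip: the pattern you correctly draw with shading $\{\boks{0}{1},\boks{1}{0}\}$ is $X$, not $Z$; the equivalence in Table~\ref{tab-1} is only distributional, so it is $X$ itself that must sit in the North East box, though this does not affect your computation since Theorem~\ref{thm-length-1} covers both.
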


\begin{proof} The mesh pattern $p$ is obtained from the pattern $Y$ by inserting  the pattern $p_1=\pattern{scale=0.8}{1}{1/1}{0/1,1/0}$. By Theorem~\ref{thm-length-1}, we have $F_{p_1}(x,q)=\frac{F(x)}{1+x(1-q)F(x)}$. After substituting it into~\eqref{dis-pattern-Y-2}, we obtain the desired result.\end{proof}

The following result is new.

\begin{coro}\label{cor4} For the pattern Nr.~$66$ = $p=\pattern{scale=0.6}{2}{1/1,2/2}{0/0,0/1,2/0,1/0,0/2,1/1}$, we have
\begin{align*}		
A_p(x) & =(1-x)F(x)+x, \\
F_p(x,q) & =(1-x)F(x)+x+x\sum_{n=1}^{\infty}\prod_{i=0}^{n-1}(q+i)x^n.
\end{align*}
\end{coro}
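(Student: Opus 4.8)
\textbf{Proof proposal for Corollary~\ref{cor4}.}

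The plan is to apply Theorem~\ref{th:pattern Y} directly, recognizing that the pattern Nr.\ 66 is obtained from the pattern $Y$ by inserting an appropriate mesh pattern $p_1$ into the North East box. Comparing the shading $\{\boks{0}{0},\boks{0}{1},\boks{0}{2},\boks{1}{0},\boks{2}{0},\boks{1}{1}\}$ of pattern Nr.\ 66 against the shading of $Y$ inflated into a $2\times 2$ grid, the inner pattern $p_1$ sits in the top-right cell and, in the local coordinates of that cell, has a single point with the lower-left box shaded; that is, $p_1 = \pattern{scale=0.8}{1}{1/1}{0/0}$. So the first step is to justify this identification by checking that the shaded boxes of Nr.\ 66 decompose as the three boxes coming from $Y$ plus the box $\boks{1}{1}$, which is exactly the lower-left shading of $p_1$ transplanted into the NE box.

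Next I would compute $A_{p_1}(x)$ and $F_{p_1}(x,q)$ for $p_1=\pattern{scale=0.8}{1}{1/1}{0/0}$. This is the length-one mesh pattern consisting of a single element with the box below-and-left shaded: an occurrence is an element $\pi_i$ such that no element of $\pi$ lies both to the left of position $i$ and below value $\pi_i$. Counting occurrences of this pattern in a permutation amounts to counting left-to-right minima (or a closely related statistic), so the generating function $F_{p_1}(x,q)$ should be a known ``records''-type distribution. The standard computation, inserting the largest element into a permutation of length $n-1$ and tracking how it changes the count, gives the recurrence whose solution is the series $\sum_{n\ge 0}\left(\prod_{i=0}^{n-1}(q+i)\right)x^n$, and avoidance ($q=0$) forces the statistic to be zero, which happens only for the empty and length-one permutations, yielding $A_{p_1}(x)=1+x$. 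I would verify these two formulas carefully, as they are the genuine content here.

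Finally, I substitute into the formulas \eqref{avoidance-patt-Y-2} and \eqref{dis-pattern-Y-2} of Theorem~\ref{th:pattern Y}. For avoidance, $A_p(x)=(1-x)F(x)+xA_{p_1}(x)=(1-x)F(x)+x(1+x)$; here I would double-check the constant- and linear-order terms, since the claimed answer is $(1-x)F(x)+x$, which suggests the precise form of $A_{p_1}(x)$ (and hence the empty-versus-nonempty bookkeeping) must be reconciled with the stated result. For the distribution, substituting $F_{p_1}(x,q)=\sum_{n\ge 0}\left(\prod_{i=0}^{n-1}(q+i)\right)x^n$ into \eqref{dis-pattern-Y-2} gives $F_p(x,q)=(1-x)F(x)+x\sum_{n\ge 0}\left(\prod_{i=0}^{n-1}(q+i)\right)x^n$, and separating the $n=0$ term (which equals $1$) from the sum recovers the stated $(1-x)F(x)+x+x\sum_{n=1}^{\infty}\prod_{i=0}^{n-1}(q+i)x^n$.

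\textbf{Main obstacle.} The crux is the derivation of the distribution $F_{p_1}(x,q)$ for the single-element pattern $p_1=\pattern{scale=0.8}{1}{1/1}{0/0}$: identifying which permutation statistic it encodes and establishing the product formula $\prod_{i=0}^{n-1}(q+i)$. Everything else is a mechanical substitution into Theorem~\ref{th:pattern Y}, so the care lies in getting this inner distribution and its avoidance specialization exactly right, including the low-order terms where the empty pattern conventions described in the introduction come into play.
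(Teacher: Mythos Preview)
Your approach is exactly the paper's: recognize Nr.\ 66 as $Y$ with $p_1=\pattern{scale=0.8}{1}{1/1}{0/0}$ inserted in the North East box, compute $A_{p_1}$ and $F_{p_1}$, and substitute into Theorem~\ref{th:pattern Y}. The identification of $p_1$ and the formula $F_{p_1}(x,q)=\sum_{n\ge 0}\bigl(\prod_{i=0}^{n-1}(q+i)\bigr)x^n$ (the unsigned Stirling numbers of the first kind, via the left-to-right minimum statistic) are both correct, and the distribution part goes through as you describe.

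The one genuine error is your value of $A_{p_1}(x)$. An occurrence of $p_1$ is a left-to-right minimum, and \emph{every} nonempty permutation has at least one of these: the first element $\pi_1$ has nothing to its left, so the shaded box $\boks{0}{0}$ is automatically empty. Hence the only $p_1$-avoiding permutation is the empty one, and $A_{p_1}(x)=1$, not $1+x$. (Equivalently, set $q=0$ in your own product: for $n\ge 1$ the factor $i=0$ kills $\prod_{i=0}^{n-1}(q+i)$.) With $A_{p_1}(x)=1$, formula~\eqref{avoidance-patt-Y-2} gives $A_p(x)=(1-x)F(x)+x$ on the nose; there is no convention or bookkeeping issue to reconcile.
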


\begin{proof} The mesh pattern $p$ is obtained from the pattern $Y$ by inserting  the pattern $p_1=\pattern{scale=0.8}{1}{1/1}{0/0}$. Note that  $A_{p_1}(x)=1$ and the distribution of $p_1$ is given by the {\em unsigned Stirling numbers of the first kind} \cite[p. 19]{Stanley}:
\begin{align*}
F_{p_1}(x,q)=1+\sum_{n=1}^{\infty}\prod_{i=0}^{n-1}(q+i)x^n.
\end{align*} Substituting these into \eqref{avoidance-patt-Y-2} and \eqref{dis-pattern-Y-2} gives the desired result. \end{proof}

\section{Distributions of several mesh patterns}\label{sec3}
In this section, we generalize known distribution results for the patterns

\begin{center}
Nr. 13 =  $\pattern{scale = 0.6}{2}{1/1,2/2}{0/0,0/1,1/0,0/2,2/0,2/2,2/1,1/2}$, \quad
Nr. 19 = $\pattern{scale = 0.6}{2}{1/1,2/2}{0/1,0/2,1/1,1/2,2/0,2/2}$,  \quad
Nr. 20 = $\pattern{scale = 0.6}{2}{1/1,2/2}{0/0,0/1,0/2,1/1,1/2,2/1,2/0}$, \\[6pt]

Nr. 22 = $\pattern{scale = 0.6}{2}{1/1,2/2}{0/0,0/1,1/1,1/2,2/0,2/2}$,  \quad
Nr. 28 = $\pattern{scale=0.6}{2}{1/1,2/2}{0/0,0/1,1/0,1/2,2/1,2/2}$.
\end{center}
The theorems proved in this section are not so difficult (and somewhat similar), but they prepare the reader for the upcoming more involved distribution or avoidance results.

\subsection{The pattern Nr. 13}
We first consider the generalization of the pattern Nr. 13 = $\pattern{scale = 0.6}{2}{1/1,2/2}{0/0,0/1,1/0,0/2,2/0,2/2,2/1,1/2}$.
\begin{thm}\label{th:pattern 13}
Suppose that $p$ is the pattern shown in Figure~\ref{pic-thm-pat-13}, where $p_1$ is any mesh pattern, and the labels $a$ and $b$ are to be ignored. Then, \begin{align*}		
A_p(x) & =F(x)-x^2(F(x)-A_{p_1}(x)),\\[6pt]
	F_p(x,q) & = (1-x^2)F(x)+x^2 F_{p_1}(x,q).
	\end{align*}
\end{thm}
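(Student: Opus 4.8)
The plan is to mirror the proof of Theorem~\ref{th:pattern Y}: decompose an arbitrary permutation according to whether it can carry an occurrence of $p$ at all, and then reduce the count of occurrences of $p$ to those of $p_1$. The starting point is to read off the shaded region of $p$ in Figure~\ref{pic-thm-pat-13} in order to pin down the global position and value of the two marked elements $a$ and $b$. Since the only unshaded box is the central one, $(1,1)$, the entire left column, right column, bottom row, and top row of boxes are shaded. Consequently, in any occurrence the element $a$ has nothing to its left and nothing below it, which forces $a$ to be simultaneously the first and the smallest entry of $\pi$; symmetrically, $b$ must be simultaneously the last and the largest entry. Hence an occurrence can exist only when $\pi_1 = 1$ and $\pi_n = n$, in which case $a = \pi_1$ and $b = \pi_n$ are uniquely determined, and the central box $(1,1)$ contains exactly the factor $\pi_2\pi_3\cdots\pi_{n-1}$.

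The key reduction step is then the observation that, whenever $\pi_1 = 1$ and $\pi_n = n$, occurrences of $p$ in $\pi$ are in bijection with occurrences of $p_1$ in the flattened middle word $\sigma = \red(\pi_2 \cdots \pi_{n-1}) \in S_{n-2}$: every occurrence of $p_1$ sitting inside box $(1,1)$, together with the fixed pair $a,b$, yields an occurrence of $p$, and all occurrences arise in this way. Conversely, each $\sigma \in S_m$ comes from a unique $\pi \in S_{m+2}$ with $\pi_1 = 1$ and $\pi_n = n$. Thus $p(\pi) = p_1(\sigma)$ in this case, while $p(\pi) = 0$ whenever $\pi_1 \neq 1$ or $\pi_n \neq n$.

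For the avoidance formula I would note that $\pi$ contains $p$ exactly when $\pi_1 = 1$, $\pi_n = n$, and $\sigma$ contains $p_1$; the generating function for such $\pi$ is $x^2(F(x) - A_{p_1}(x))$, the factor $x^2$ accounting for the two pinned endpoints and $F(x) - A_{p_1}(x)$ for the middle permutations containing $p_1$. Subtracting from $F(x)$ gives $A_p(x) = F(x) - x^2(F(x) - A_{p_1}(x))$. For the distribution I would split $S_n$ into the permutations with $\pi_1 = 1,\ \pi_n = n$, whose refined generating function is $x^2 F_{p_1}(x,q)$ by the bijection above (the length-$m$ middles contribute $\sum_{\sigma \in S_m} q^{p_1(\sigma)}$), and all remaining permutations, which carry no occurrence, contribute $q^0$, and have generating function $F(x) - x^2 F(x) = (1 - x^2) F(x)$; adding the two pieces yields $F_p(x,q) = (1-x^2)F(x) + x^2 F_{p_1}(x,q)$.

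The only genuine obstacle is the first structural step, namely correctly arguing from the shading that both $a$ and $b$ are forced to the extreme position and extreme value, so that every occurrence is pinned at the two endpoints of $\pi$ and the central box absorbs the entire remaining factor. Once that collapse is established the enumeration is immediate, and I would only need to take brief care of the degenerate cases (the small lengths $n \le 2$, and an empty $p_1$, where the stated substitution conventions make the term $x^2 F_{p_1}(x,q)$ behave as intended).
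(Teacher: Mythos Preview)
Your proposal is correct and follows essentially the same approach as the paper: both arguments hinge on the observation that the shading forces $a$ and $b$ to be uniquely determined (you make this more explicit by pinning them to $\pi_1=1$ and $\pi_n=n$), so that occurrences of $p$ correspond bijectively to occurrences of $p_1$ in the central factor. Your derivation of the distribution formula is even slightly more direct, since you split according to whether $\pi_1=1$ and $\pi_n=n$ rather than first passing through $A_p(x)$ as the paper does, but this is only a cosmetic difference.
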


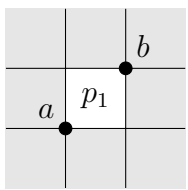
\begin{figure}[!ht]
\begin{center}
	\begin{tikzpicture}[scale=0.8, baseline=(current bounding box.center)]
	\foreach \x/\y in {0/0,0/1,1/0,0/2,2/0,2/2,2/1,1/2}		
	\fill[gray!20] (\x,\y) rectangle +(1,1);
	\draw (0.01,0.01) grid (2+0.99,2+0.99);
	\filldraw (1,1) circle (3pt) node[above left] {$a$};
	\filldraw (2,2) circle (3pt) node[above right] {$b$};
	\node at (1.5,1.5) {$p_1$};
	\end{tikzpicture}
\caption{Related to the proof of Theorem~\ref{th:pattern 13}.}\label{pic-thm-pat-13}
\end{center}
\end{figure}

\begin{proof}
We have the following functional equation:
\begin{equation}\label{av-pattern-13}
A_p(x) +x^2(F(x)-A_{p_1}(x))=F(x).
\end{equation}
Indeed, any permutation counted by $F(x)$ either avoids $p$, which is counted by $A_p(x)$, or contains at least one occurrence of $p$. The generating function for the latter case is  $x^2(F(x)-A_{p_1}(x))$. Indeed, an occurrence of $p$ implies at least one occurrence of $p_1$ in the central box in Figure~\ref{pic-thm-pat-13}, which contributes the factor of $F(x)-A_{p_1}(x)$. Besides, the factor of $x^2$ is contributed by $ab$.
	
For the distribution, we have the following functional equation:
\begin{equation}\label{dis-pattern-13}
A_p(x) +x^2(F_{p_1}(x,q)-A_{p_1}(x))=F_p(x,q).
\end{equation}
The proof of \eqref{dis-pattern-13} is essentially the same as that  in the avoidance case. In particular, the factor of $F_{p_1}(x,q)-A_{p_1}(x)$ comes from the fact that in any occurrence of $p$, $a$, and $b$ in Figure~\ref{pic-thm-pat-13} must be the same.
The desired result follows from \eqref{av-pattern-13} and \eqref{dis-pattern-13}.
This completes the proof.
\end{proof}

\subsection{The pattern Nr. 19}

Now we generalize the pattern Nr. 19 = $\pattern{scale = 0.6}{2}{1/1,2/2}{0/1,0/2,1/1,1/2,2/0,2/2}$.

\begin{thm}\label{th:pattern 19}
Suppose that $p$ is the pattern shown in Figure~\ref{pic-thm-pat-19}, where $p_1$ is any mesh pattern, and the labels $a$, $b$, $A$, and $B$ are to be ignored. Then, \begin{align*}		
A_p(x)&=F(x)-x(F(x)-1)(F(x)-A_{p_1}(x)),\\[6pt]
F_p(x,q) &= F(x)+x(F(x)-1)(F_{p_1}(x,q)-F(x)).
	\end{align*}
\end{thm}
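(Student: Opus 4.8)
The plan is to mimic the structure of the proof of Theorem~\ref{th:pattern Y} and Theorem~\ref{th:pattern 13}: set up a functional equation relating $F_p(x,q)$ to $F(x)$ and $F_{p_1}(x,q)$ by classifying permutations according to whether and how they contain an occurrence of $p$, then solve. The pattern in Figure~\ref{pic-thm-pat-19} has two marked dots $a$ (the lower-left element of the underlying $12$) and $b$ (the upper-right element), with $p_1$ inserted in one of the boxes; the shading forces strong locality constraints on where $b$ and the elements of $p_1$ can sit relative to $a$. First I would read off from the shading exactly which regions must be empty so as to pin down the combinatorial meaning of an occurrence: the goal is to show that every permutation contributing a $q$-power factors as an element $a$, an obligatory structural piece (accounting for the factor $x(F(x)-1)$, where the $F(x)-1$ is a nonempty block and the leading $x$ is the element $a$), and a region carrying the $p_1$-statistic contributing $F_{p_1}(x,q)$.

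The key step is to interpret the target formula and reverse-engineer the decomposition. Rewriting the claim as
\begin{equation*}
F_p(x,q) = F(x) + x(F(x)-1)\bigl(F_{p_1}(x,q) - F(x)\bigr),
\end{equation*}
I read the term $x(F(x)-1)(F_{p_1}(x,q)-F(x))$ as the signed correction measuring the difference between permutations that do realize the $a$-$b$ skeleton (with $p_1$ tracked) and a baseline where the inserted box carries no occurrence. Concretely I would argue that $F_p(x,q) - F(x)$ counts, with the $q$-statistic, exactly those configurations in which the skeleton formed by $a$, $b$, and the intervening forced region is present, and that this skeleton contributes the factor $x(F(x)-1)$ — the $x$ from $a$, and $F(x)-1$ from a nonempty region dictated by the shading (the elements labelled $A$, $B$ enforcing which side is populated). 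The remaining factor $F_{p_1}(x,q)-F(x)$ then records the $p_1$-occurrences in the designated box measured against the all-of-$S_n$ baseline $F(x)$, consistent with the sign. I would phrase this as a single functional equation, in parallel with \eqref{dis-pattern-13}, and then specialize $q=0$ (using $F_{p_1}(x,0)=A_{p_1}(x)$) to recover the avoidance formula, exactly as the avoidance statement $A_p(x)=F(x)-x(F(x)-1)(F(x)-A_{p_1}(x))$ requires.

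The main obstacle I anticipate is correctly justifying the $F(x)-1$ factor and the appearance of $F_{p_1}(x,q)-F(x)$ with its sign, rather than the cleaner $F_{p_1}(x,q)-A_{p_1}(x)$ that arose for patterns $Y$ and Nr.~13. This difference signals that the decomposition here is genuinely more subtle: the box hosting $p_1$ is not cleanly separated by the shading, so a naive "insert $p_1$ into an isolated box" argument will not immediately apply. I would resolve this by carefully tracking, for each choice of element $a$, the permutations on the elements greater than $a$ (or lying in the relevant quadrant), showing that requiring a nonempty such region and then imposing the $p_1$-constraint yields precisely the two-term product; the bookkeeping of which elements are forced into which shaded-versus-unshaded region is where I expect to spend the most care. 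Once that decomposition is verified, solving for $F_p(x,q)$ is immediate, and setting $q=0$ gives $A_p(x)$ with no extra work.
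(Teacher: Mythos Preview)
Your overall approach matches the paper's: split permutations into those avoiding $p$ and those containing it, set up a functional equation, and solve. However, you have misidentified the main obstacle and are about to work harder than necessary.

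The key structural observation you are missing is that in any occurrence of $p$ the element $b$ is forced to be the overall maximum of the permutation (the entire top row of boxes $(0,2),(1,2),(2,2)$ is shaded), and once $b$ is fixed, $a$ is forced to be the maximum among the elements to the left of $b$ (boxes $(0,1)$ and $(1,1)$ are shaded). Thus $a$ and $b$ are uniquely determined. The factor $x$ comes from $b$, not $a$; the factor $F(x)-1$ is the nonempty permutation $AaB$ formed by boxes $A$, $B$ together with $a$ (which is its maximum); and the box carrying $p_1$ is the region to the right of $b$ with values between $a$ and $b$, which \emph{is} cleanly isolated by the shading, contrary to what you anticipated.

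Consequently the natural functional equation is exactly of the same shape as for patterns $Y$ and Nr.~13:
\[
A_p(x) + x(F(x)-1)\bigl(F_{p_1}(x,q)-A_{p_1}(x)\bigr) = F_p(x,q),
\]
with the ``cleaner'' factor $F_{p_1}(x,q)-A_{p_1}(x)$ you said you were hoping for. The expression $F_{p_1}(x,q)-F(x)$ in the theorem statement is not a combinatorial artifact demanding a subtler decomposition; it appears purely by substituting the avoidance formula $A_p(x)=F(x)-x(F(x)-1)(F(x)-A_{p_1}(x))$ into the displayed equation and simplifying. There is no hidden subtlety in the $p_1$ box, and your worry that ``a naive insert-$p_1$-into-an-isolated-box argument will not immediately apply'' is unfounded here.
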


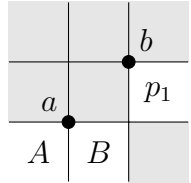
\begin{figure}[!ht]
\begin{center}
	\begin{tikzpicture}[scale=0.8, baseline=(current bounding box.center)]
	\foreach \x/\y in {0/1,0/2,1/1,1/2,2/0,2/2}		
	\fill[gray!20] (\x,\y) rectangle +(1,1);
	\draw (0.01,0.01) grid (2+0.99,2+0.99);
	\filldraw (1,1) circle (3pt) node[above left] {$a$};
	\filldraw (2,2) circle (3pt) node[above right] {$b$};
	\node at (0.5,0.5) {$A$};
    \node at (1.5,0.5) {$B$};
    \node at (2.5,1.5) {$p_1$};
	\end{tikzpicture}
\caption{Related to the proof of Theorem~\ref{th:pattern 19}.}\label{pic-thm-pat-19}
\end{center}
\end{figure}

\begin{proof}
We have the following functional equation:
\begin{equation}\label{av-pattern-19}
A_p(x)+x(F(x)-1)(F(x)-A_{p_1}(x))=F(x),
\end{equation}
where the right hand side counts all permutations. Indeed, each permutation either avoids $p$, which is given by the $A_p(x)$ term in \eqref{av-pattern-19}, or it contains at least one occurrence of $p$. The latter case is given by the second term on the left hand side of \eqref{av-pattern-19}, which shall be proved in the following.
To pick the occurrence $ab$ in Figure~\ref{pic-thm-pat-19}, we choose $b$  the \emph{highest} possible and $a$ is then uniquely determined. Referring to this figure, we note that the East box must contain at least one occurrence of $p_1$, which is counted by $F(x)-A_{p_1}(x)$. Furthermore, the boxes $A$ and $B$ together with $a$, which is the maximum element in the permutation $AaB$, contribute the factor of $F(x)-1$. Note that $a$ must exist, so $AaB$ is not empty and there are no other restrictions for $AaB$.
Finally, $b$ contributes the factor of $x$. Thus, we complete the proof of   \eqref{av-pattern-19} and  hence give the formula of  $A_p(x)$.
	
    For the distribution, we have the following functional equation:
\begin{equation}\label{dis-pattern-19}
A_p(x)+x(F(x)-1)(F_{p_1}(x,q)-A_{p_1}(x))=F_p(x,q).
\end{equation}
The proof of \eqref{dis-pattern-19} is essentially the same as that in the avoidance case.
Since $a$ is uniquely determined, the boxes $A$ and $B$ together with $a$ contribute the factor of $F(x)-1$. The East box contributes the factor of $F_{p_1}(x,q)-A_{p_1}(x)$, since each occurrence of $p_1$ in that box induces one  occurrence of $p$. Finally, the factor of $x$ corresponds to the element $b$, which completes the proof of \eqref{dis-pattern-19}.
Substituting the formula of $A_p(x)$  into \eqref{dis-pattern-19}   gives the formula of $F_p(x,q)$ as desired.
\end{proof}

\subsection{The pattern Nr. 20}
Now, we generalize the pattern $\pattern{scale = 0.6}{2}{1/1,2/2}{0/0,0/1,0/2,1/1,1/2,2/1,2/0}$ by adding $p_1$ and $p_2$ in it.

\begin{thm}\label{th:pattern 20}
Suppose that $p$ is the pattern shown in Figure~\ref{pic-thm-pat-20}, where $p_1$ and $p_2$ are any mesh patterns, and the labels $a$ and $b$ are to be ignored. Then,
\begin{align*}		
A_p(x)=&\, F(x)-x^2(F(x)-A_{p_1}(x))(F(x)-A_{p_2}(x)),\\[6pt]
F_p(x,q)=&\, F(x)+x^2 \big( (F_{p_1}(x,q)-A_{p_1}(x))(F_{p_2}(x,q)-A_{p_2}(x))-\\
&(F(x)-A_{p_1}(x))(F(x)-A_{p_2}(x)) \big).
\end{align*}
\end{thm}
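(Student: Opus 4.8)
The plan is to mirror the functional-equation method already used for Theorems~\ref{th:pattern 13} and~\ref{th:pattern 19}, now carrying two inserted patterns through the argument at once. First I would pin down the shape that a single occurrence forces on the ambient permutation $\pi$. Reading off the shading in Figure~\ref{pic-thm-pat-20}, the entire left column of boxes is shaded, so the lower dot $a$ must be the leftmost entry, $a=\pi_1$; the whole middle row of boxes (values strictly between $a$ and $b$) is shaded, so $b$ must be the value immediately above $a$, i.e.\ $b=a+1$. Hence, unlike in Theorem~\ref{th:pattern 19}, no canonical choice is needed: the core pair $(a,b)=(\pi_1,\pi_1+1)$ is uniquely determined by $\pi$. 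The remaining shading then forces every entry smaller than $a$ to lie strictly between $a$ and $b$ in position (inside the box $(1,0)$, which carries $p_2$) and every entry larger than $b$ to lie to the right of $b$ (inside the box $(2,2)$, which carries $p_1$). Thus a permutation containing $p$ decomposes uniquely as $\pi=a\,W_2\,b\,W_1$, where $W_2$ is the block of all entries below $a$ and $W_1$ is the block of all entries above $b$.

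Next I would set up the avoidance equation exactly as before. A permutation contains $p$ precisely when it admits the decomposition above with $W_2$ containing an occurrence of $p_2$ and $W_1$ containing an occurrence of $p_1$; the two dots contribute $x^2$, the block $W_1$ contributes $F(x)-A_{p_1}(x)$ and the block $W_2$ contributes $F(x)-A_{p_2}(x)$, the two blocks being enumerated independently since their entry sets are disjoint and order-separated. This yields
\[
A_p(x)+x^2\big(F(x)-A_{p_1}(x)\big)\big(F(x)-A_{p_2}(x)\big)=F(x),
\]
which is the claimed formula for $A_p(x)$. For the distribution I would run the same decomposition while tracking occurrences: with the core fixed, each block contributes its own $q$-enumerator over the permutations that contain the relevant pattern, namely $F_{p_1}(x,q)-A_{p_1}(x)$ for $W_1$ and $F_{p_2}(x,q)-A_{p_2}(x)$ for $W_2$. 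This gives the functional equation
\[
F_p(x,q)=A_p(x)+x^2\big(F_{p_1}(x,q)-A_{p_1}(x)\big)\big(F_{p_2}(x,q)-A_{p_2}(x)\big),
\]
and substituting the expression for $A_p(x)$ produces the stated closed form for $F_p(x,q)$.

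The step I expect to be the main obstacle is justifying that the two boxes contribute independently and that the decomposition is genuinely exact — this is the first case with two separate inserted patterns, so one must argue carefully that specifying an occurrence amounts to specifying a configuration in box $(1,0)$ together with an \emph{independent} configuration in box $(2,2)$, with no interference between the two shaded-constrained regions and no stray entry landing in a shaded box. Once this independence is established the generating-function bookkeeping is routine; the only point to handle with care is the inclusion–exclusion shape of the answer, namely that the $-\,(F(x)-A_{p_1}(x))(F(x)-A_{p_2}(x))$ term is exactly the avoidance contribution that remains after substituting $A_p(x)$ into the distribution equation. Finally, I would check the degenerate cases where $p_1$ or $p_2$ is empty against the substitution convention $A_{p_i}(x)\mapsto 0$ and $F_{p_i}(x,q)\mapsto qF(x)$ recorded earlier, to confirm consistency.
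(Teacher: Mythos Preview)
Your proposal is correct and follows essentially the same approach as the paper's proof: both establish the avoidance functional equation $A_p(x)+x^2(F(x)-A_{p_1}(x))(F(x)-A_{p_2}(x))=F(x)$ and the distribution functional equation $F_p(x,q)=A_p(x)+x^2(F_{p_1}(x,q)-A_{p_1}(x))(F_{p_2}(x,q)-A_{p_2}(x))$, then substitute. Your write-up is in fact more explicit than the paper's---where the paper simply says the uniqueness of $a$ and $b$ is ``evident from Figure~\ref{pic-thm-pat-20}'', you spell out that the fully shaded left column forces $a=\pi_1$ and the fully shaded middle row forces $b=a+1$, and you make the block decomposition $\pi=a\,W_2\,b\,W_1$ explicit.
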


\begin{figure}[!ht]
\begin{center}
	\begin{tikzpicture}[scale=0.8, baseline=(current bounding box.center)]
	\foreach \x/\y in {0/0,0/1,0/2,1/1,1/2,2/1,2/0}		
	\fill[gray!20] (\x,\y) rectangle +(1,1);
	\draw (0.01,0.01) grid (2+0.99,2+0.99);
	\filldraw (1,1) circle (3pt) node[above left] {$a$};
	\filldraw (2,2) circle (3pt) node[above left] {$b$};
	\node at (2.5,2.5) {$p_1$};
	\node at (1.5,0.5) {$p_2$};
	\end{tikzpicture}
\caption{Related to the proof of Theorem~\ref{th:pattern 20}.}\label{pic-thm-pat-20}
\end{center}
\end{figure}
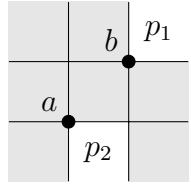

\begin{proof}
We have the following functional equation:
\begin{equation}\label{av-pattern-20}
A_p(x)+x^2(F(x)-A_{p_1}(x))(F(x)-A_{p_2}(x))=F(x).
\end{equation}
Indeed, the right hand side of \eqref{av-pattern-20} counts all permutations. On the left hand side, $A_p(x)$ gives avoidance of $p$, and the other term, to be justified next, counts permutations with at least one occurrence of $p$. Indeed, the elements $a$ and $b$ contributing the factor of $x^2$ are uniquely determined, which is evident from Figure~\ref{pic-thm-pat-20}. Referring to this figure, we note that the upper (resp., lower) non-shaded box $A$ (resp., $B$) must contain at least one occurrence of $p_1$ (resp., $p_2$) counted by $F(x)-A_{p_1}(x)$ (resp., $F(x)-A_{p_2}(x)$). Thus, we complete the proof of   \eqref{av-pattern-20} and  hence give the formula of $A_p(x)$.
	
    For the distribution, we have the following functional equation:
\begin{equation}\label{dis-pattern-20}
A_p(x)+x^2(F_{p_1}(x,q)-A_{p_1}(x))(F_{p_2}(x,q)-A_{p_2}(x))=F_p(x,q).
\end{equation}
The proof of~\eqref{dis-pattern-20} is essentially the same as that in the avoidance case.
The box $A$ (resp., $B$) contributes the factor of $F_{p_1}(x,q)-A_{p_1}(x)$ (resp., $F_{p_2}(x,q)-A_{p_2}(x)$), since each occurrence of $p_1$ in $A$, together with any occurrence of $p_2$ in $B$, form an occurrence of $p$. Together with the factor of $x^2$ corresponding to $ab$, we complete the proof of \eqref{dis-pattern-20}.
Substituting the formula of $A_p(x)$ into  \eqref{dis-pattern-20}  gives the desired result.
\end{proof}

\subsection{The pattern Nr. 22}
Next we generalize the pattern $\pattern{scale=0.6}{2}{1/1,2/2}{0/0,0/1,1/1,1/2,2/0,2/2}$ by adding $p_1$, $p_2$, and $p_3$ in it.

\begin{thm}\label{th:pattern 22}
Suppose that $p$ is the pattern shown in Figure~\ref{pic-thm-pat-22}, where $p_1$, $p_2$, and $p_3$ are  any mesh patterns, and the labels $a$ and $b$ are to be ignored. Then,
\begin{align*}		
A_p(x)  = &  \, F(x)-x^2 (F(x)-A_{p_1}(x))(F(x)-A_{p_2}(x))(F(x)-A_{p_3}(x)),\\[6pt]
F_p(x,q)  = &  \, F(x)+x^2\Big( (F_{p_1}(x,q)-A_{p_1}(x))(F_{p_2}(x,q)-A_{p_2}(x))(F_{p_3}(x,q)-A_{p_3}(x))\\
& \quad  -(F(x)-A_{p_1}(x))(F(x)-A_{p_2}(x)) (F(x)-A_{p_3}(x))\Big).
\end{align*}
\end{thm}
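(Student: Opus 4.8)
The pattern $p$ in Figure~\ref{pic-thm-pat-22} has three inserted patterns $p_1, p_2, p_3$ in three unshaded boxes, with the base pattern being $\pattern{scale=0.6}{2}{1/1,2/2}{0/0,0/1,1/1,1/2,2/0,2/2}$ (base shaded at $(0,0),(0,1),(1,1),(1,2),(2,0),(2,2)$).

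Let me understand the structure. The pattern Nr. 22 is length 2 with dots at $(1,1)$ and $(2,2)$, so $\tau = 12$. Base shaded boxes: $(0,0),(0,1),(1,1),(1,2),(2,0),(2,2)$. The unshaded boxes are: $(0,2),(1,0),(2,1)$.

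So in the generalization (from Table 1), $p_1$ is at $(0,2)$ (top-left), $p_2$ at $(1,0)$ (bottom-middle), $p_3$ at $(2,1)$ (right-middle).

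Let me verify: Table 1 says node positions $0.6/2.6$ (→ box $(0,2)$, $p_1$), $1.6/0.5$ (→ box $(1,0)$, $p_2$), $2.6/1.6$ (→ box $(2,1)$, $p_3$). Yes.

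Now let me plan the proof.
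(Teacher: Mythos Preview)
Your proposal is not a proof: it correctly identifies the shaded and unshaded boxes of the base pattern and locates $p_1,p_2,p_3$ in boxes $(0,2)$, $(1,0)$, $(2,1)$, but then stops at ``Now let me plan the proof.'' No argument for either the avoidance formula or the distribution formula is given.

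The substantive work you are missing is precisely what the paper carries out. First, one must show that in any permutation containing $p$, the pair $(a,b)$ realizing the frame of Nr.\ 22 is \emph{unique}; this is not automatic and requires checking that no second occurrence of the frame can sit inside, or start inside, any of the three unshaded boxes (the paper does this case by case using the shading). Uniqueness is what makes the contribution of $a$ and $b$ a simple factor of $x^2$ rather than something more complicated. Second, one must argue that the contents of the three unshaded boxes are independent and that each must contain at least one occurrence of its respective $p_i$, yielding the product $(F(x)-A_{p_1}(x))(F(x)-A_{p_2}(x))(F(x)-A_{p_3}(x))$ in the avoidance case, and that for the distribution each occurrence of $p$ corresponds bijectively to a triple of occurrences of $(p_1,p_2,p_3)$ in their boxes, giving $(F_{p_1}(x,q)-A_{p_1}(x))(F_{p_2}(x,q)-A_{p_2}(x))(F_{p_3}(x,q)-A_{p_3}(x))$. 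None of these steps appears in your proposal.
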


\begin{figure}[!ht]
\begin{center}
	\begin{tikzpicture}[scale=0.8, baseline=(current bounding box.center)]
	\foreach \x/\y in {0/0,0/1,1/1,1/2,2/0,2/2}		
	\fill[gray!20] (\x,\y) rectangle +(1,1);
	\draw (0.01,0.01) grid (2+0.99,2+0.99);
	\filldraw (1,1) circle (3pt) node[above left] {$a$};
	\filldraw (2,2) circle (3pt) node[above right] {$b$};
	\node at (0.5,2.5) {$p_1$};
	\node at (1.5,0.5) {$p_2$};
	\node at (2.5,1.5) {$p_3$};
	\end{tikzpicture}
\caption{Related to the proof of Theorem~\ref{th:pattern 22}.}\label{pic-thm-pat-22}
\end{center}
\end{figure}
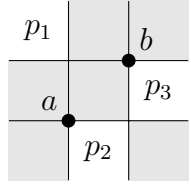

\begin{proof}
Any permutation, counted by $F(x)$, either avoids $p$, which is counted by $A_p(x)$, or contains at least one occurrence of $p$. In the latter situation, the elements $ab$  in an occurrence of $p$ are uniquely defined. Indeed, looking at Figure~\ref{pic-thm-pat-22} we see that another occurrence of $p$ cannot be inside of any of the unshaded boxes:
\begin{itemize}
\item if $p$ would occur in the box labeled with $p_1$, it would contradict the South East box being shaded;
\item if $p$ would occur in the box labeled with $p_2$, it would contradict the element $b$ being in the North East shaded box;
\item  if $p$ would occur in the box labeled with $p_3$,  it would contradict the element $a$ being in the South West shaded box.
\end{itemize}
Moreover, another occurrence of $p$ cannot begin at the box 
\begin{itemize}
\item labeled $p_1$ because of the two shaded top boxes;
\item labeled $p_2$ because of the element $a$;
\item labeled $p_3$ because of the North East box being shaded.
\end{itemize}
 So, $a$ and $b$  will contribute the factor of $x^2$. Referring to this figure, we note that the non-shaded box labeled by $p_1$ (resp., $p_2$ and $p_3$) must contain at least one occurrence of  $p_1$ (resp., $p_2$ and $p_3$) and thus is counted by $F(x)-A_{p_1}(x)$ (resp., $F(x)-A_{p_2}(x)$ and $F(x)-A_{p_3}(x)$). Thus, we obtain
\begin{equation*}
%\label{av-pattern-22}
A_p(x)+x^2(F(x)-A_{p_1}(x))(F(x)-A_{p_2}(x))(F(x)-A_{p_3}(x))=F(x),
\end{equation*}
which leads to  the formula of $A_p(x)$.

    For the distribution, we have the following functional equation:
\begin{align}
 A_p(x) +  x^2  \big( F_{p_1}(x,q)  -  A_{p_1}(x) \big)  \big( F_{p_2}(x,q)-A_{p_2}(x)\big)   & \big(F_{p_3}(x,q)-A_{p_3}(x)\big)   \notag \\[5pt]
   = &\,  F_p(x,q). \label{dis-pattern-22}
\end{align}
The proof of \eqref{dis-pattern-22} is essentially the same as that in the avoidance case. The only remark is that any occurrence of the pair $(p_1, p_2, p_3)$ in their respective non-shaded boxes induce an occurrence of $p$.
Substituting the formula of $A_p(x)$ into \eqref{dis-pattern-22}   gives the formula of $F_p(x,q)$.
This completes the proof.
\end{proof}

\subsection{The pattern Nr. 28}
We now generalize the pattern $\pattern{scale=0.6}{2}{1/1,2/2}{0/0,0/1,1/0,1/2,2/1,2/2}$ by adding $p_1$ in it.

\begin{thm}\label{th:pattern 28}
Suppose that $p$ is the pattern shown in Figure~\ref{pic-thm-pat-28}, where $p_1$ is any mesh pattern, and the labels $a$, $b$, $A$, and $B$ are to be ignored. Then,
\begin{align*}		
A_p(x)& =\frac{F(x)}{1+x^2 \big(F(x)-A_{p_1}(x) \big)F(x)},\\[6pt]
F_p(x,q)& =\frac{F(x)}{1+x^2\big( F(x)-F_{p_1}(x,q) \big)  F(x)  }.
\end{align*}
\end{thm}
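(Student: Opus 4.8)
The plan is to set up a functional equation by decomposing an arbitrary permutation $\pi$ according to where the distinguished pair $ab$ of an occurrence of $p$ sits, mimicking the strategy of the preceding theorems but accounting for the fact that the pattern Nr.~28 is \emph{recursive} in nature: the shading in Figure~\ref{pic-thm-pat-28} forces an occurrence of $p$ to have its element $b$ serve as a potential $a$ for a further occurrence nested to the northeast. This is why the answer has the rational (geometric-series) shape $F(x)/(1+\cdots)$ rather than the polynomial-in-$x^2$ shape of Theorems~\ref{th:pattern 13}, \ref{th:pattern 20}, and~\ref{th:pattern 22}. First I would examine the boxes in Figure~\ref{pic-thm-pat-28}: the shading $\{(0,0),(0,1),(1,0),(1,2),(2,1),(2,2)\}$ leaves exactly the central box (containing $p_1$) together with the box $A$ below $b$ and the box $B$ left of $a$ free, and the key structural observation is that choosing $a$ to be the \emph{leftmost} (or $b$ the \emph{lowest}) element witnessing an occurrence makes the decomposition unambiguous and lets the remaining southwest portion of $\pi$ be an arbitrary permutation that again either avoids or contains $p$.

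Concretely, I would write $A_p(x) = F(x) - (\text{gf for permutations containing at least one } p)$, and argue that a permutation containing $p$ splits uniquely as: a choice of the extremal pair $ab$ contributing $x^2$, an occurrence of $p_1$ in the central box contributing $F(x)-A_{p_1}(x)$, an unrestricted region contributing a factor of $F(x)$, and \emph{crucially} a recursively structured remainder. The self-similarity means that the ``outer'' piece, after removing $ab$ and the $p_1$-block, is itself a permutation that is either $p$-avoiding or again contains $p$; iterating this gives a geometric series in the quantity $x^2(F(x)-A_{p_1}(x))F(x)$. Summing,
\begin{equation*}
A_p(x) = F(x)\sum_{k\ge 0}\bigl(-x^2(F(x)-A_{p_1}(x))F(x)\bigr)^{k} = \frac{F(x)}{1+x^2(F(x)-A_{p_1}(x))F(x)},
\end{equation*}
equivalently captured by the one-step functional equation
\begin{equation*}
A_p(x)\bigl(1+x^2(F(x)-A_{p_1}(x))F(x)\bigr)=F(x).
\end{equation*}
For the distribution I would track the statistic $q^{p(\pi)}$ through the same decomposition: each occurrence of $p_1$ in the central box now contributes to the exponent of $q$, and the recursion records occurrences of $p$ multiplicatively, so $A_{p_1}(x)$ is replaced by $F_{p_1}(x,q)$ in the self-similar factor while the unrestricted region still contributes $F(x)$. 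This yields
\begin{equation*}
F_p(x,q)\bigl(1+x^2(F(x)-F_{p_1}(x,q))F(x)\bigr)=F(x),
\end{equation*}
from which the stated closed form follows by solving for $F_p(x,q)$.

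The main obstacle will be justifying the uniqueness and exhaustiveness of the recursive decomposition — precisely, verifying that the extremal choice of the witnessing pair $ab$ makes the factorization a genuine bijection, and that the six shaded boxes rule out any ``crossing'' occurrence that would either double-count a permutation or spoil the claim that the southwest remainder is completely unconstrained apart from its own $p$-occurrences. I would check each shaded box against each potential overlap (an occurrence beginning inside $A$, inside $B$, or straddling the central block), in the same itemized style used in the proof of Theorem~\ref{th:pattern 22}, to confirm that the only freedom is the recursive one encoded by the geometric series. Once that combinatorial bookkeeping is pinned down, both formulas follow immediately by summing the series or, equivalently, by solving the linear functional equation, exactly as in the avoidance case.
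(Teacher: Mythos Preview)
Your avoidance argument is essentially the paper's: choose the occurrence with leftmost $a$, so that the top-left box $A$ is $p$-avoiding, the central box carries at least one $p_1$, and the bottom-right box $B$ is unrestricted, yielding
\[
A_p(x)+x^2A_p(x)\bigl(F(x)-A_{p_1}(x)\bigr)F(x)=F(x).
\]
(Your geography is a bit scrambled — $A$ is northwest of $ab$, not ``below $b$'', and $B$ is southeast, not ``left of $a$'' — but the algebra you write is the right one-step equation, and your geometric-series rewriting is just its formal solution.)

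The distribution step, however, has a genuine gap. When you pass from avoidance to distribution you write that ``the unrestricted region still contributes $F(x)$'' and that the recursive factor becomes $F_p(x,q)$. This is backwards. Having fixed the \emph{leftmost} $a$, box $A$ is forced to be $p$-avoiding, so it contributes $A_p(x)$ in the distribution just as in the avoidance; it cannot contribute $F_p(x,q)$. Meanwhile box $B$ is \emph{not} inert with respect to the statistic: every occurrence of $p$ with its $a''\in B$ lies entirely inside $B$ and must be counted, so $B$ contributes $F_p(x,q)$, not $F(x)$. The combinatorially valid functional equation is therefore the paper's
\[
A_p(x)+x^2A_p(x)\bigl(F_{p_1}(x,q)-A_{p_1}(x)\bigr)F_p(x,q)=F_p(x,q),
\]
from which the stated closed form follows after substituting the avoidance formula for $A_p(x)$. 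Your displayed equation $F_p(x,q)\bigl(1+x^2(F(x)-F_{p_1}(x,q))F(x)\bigr)=F(x)$ is algebraically equivalent to this, which is why you land on the correct closed form, but the factor $F(x)-F_{p_1}(x,q)$ has no direct combinatorial meaning (its coefficients are not nonnegative), so the substitution you describe is not a proof. The fix is exactly the role-swap above; once you make it, the bookkeeping you outline in your last paragraph goes through.
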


\begin{figure}[!ht]
\begin{center}
	\begin{tikzpicture}[scale=0.8, baseline=(current bounding box.center)]
	\foreach \x/\y in {0/0,0/1,1/0,1/2,2/1,2/2}		
	\fill[gray!20] (\x,\y) rectangle +(1,1);
	\draw (0.01,0.01) grid (2+0.99,2+0.99);
	\filldraw (1,1) circle (3pt) node[above left] {$a$};
	\filldraw (2,2) circle (3pt) node[above right] {$b$};
	\node at (0.5,2.5) {$A$};
    \node at (2.5,0.5) {$B$};
    \node at (1.5,1.5) {$p_1$};
	\end{tikzpicture}
\caption{Related to the proof of Theorem~\ref{th:pattern 28}.}\label{pic-thm-pat-28}
\end{center}
\end{figure}
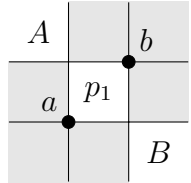

\begin{proof}
Similar to the avoidance case of Theorem~\ref{th:pattern 19}, we have
\begin{equation}\label{av-pattern-28}
A_p(x)+x^2 A_p(x) \big(F(x)-A_{p_1}(x) \big) F(x)=F(x).
\end{equation}
The only term requiring an explanation here is $x^2 A_p(x) \big(F(x)-A_{p_1}(x) \big) F(x)$ corresponding to permutations with at least one occurrence of $p$. Among all such occurrences consider $ab$ with  \emph{leftmost} possible $a$ as shown in Figure~\ref{pic-thm-pat-28}. Note that the element $b$ is then uniquely determined. Further, the middle box must contain at least one occurrence of $p_1$, which is counted by $F(x)-A_{p_1}(x)$, and the permutation in box $A$ must be $p$-avoiding since $a$ is the leftmost. Moreover, box $B$ can contain any permutation, and thus its contribution is  $F(x)$.
Finally, $a$ and $b$ contribute the factor of $x^2$, which completes the proof of   \eqref{av-pattern-28} and  hence gives the formula of $A_p(x)$.
	
For the distribution, we have the following functional equation:
\begin{equation}\label{dis-pattern-28}
A_p(x)+x^2 A_p(x) \big( F_{p_1}(x,q)-A_{p_1}(x) \big) F_p(x,q)=F_p(x,q).
\end{equation}
The proof of \eqref{dis-pattern-28} is essentially the same as that in the avoidance case. We just remark that any occurrence of $p_1$ in the middle box gives an occurrence of $p$, which explains the term $F_{p_1}(x,q)-A_{p_1}(x)$, and $F_p(x,q)$ on the left hand side of \eqref{dis-pattern-28} records occurrences of $p$ in box $B$.
\end{proof}

\section{Patterns derived from the pattern $X$}\label{sec4}
This section gives the distribution of an infinite family of mesh patterns obtained from the pattern $X=\pattern{scale = 0.8}{1}{1/1}{0/1,1/0}$. Namely, we generalize $X$ by considering \raisebox{0.6ex}{\begin{tikzpicture}[scale = 0.3, baseline=(current bounding box.center)]
 			\foreach \x/\y in {0/1,1/0}		
 			\fill[gray!20] (\x,\y) rectangle +(1,1);
 			\draw (0.01,0.01) grid (1+0.99,1+0.99);
 			\node  at (1.6,1.6) {\tiny $p_1$};
 			\filldraw (1,1) circle (4pt) node[above left] {};
 			\end{tikzpicture}},
where $p_1$ is an {\em irreducible} mesh pattern. Moreover, we use the same approach to study the distribution of a family of patterns that can be derived from the pattern  Nr.\ 33 = $\pattern{scale = 0.6}{2}{1/1,2/2}{0/1,0/2,1/0,2/0,1/2,2/1}$. Such distributions cannot be directly obtained from our results for \raisebox{0.6ex}{\begin{tikzpicture}[scale = 0.3, baseline=(current bounding box.center)]
 			\foreach \x/\y in {0/1,1/0}		
 			\fill[gray!20] (\x,\y) rectangle +(1,1);
 			\draw (0.01,0.01) grid (1+0.99,1+0.99);
 			\node  at (1.6,1.6) {\tiny $p_1$};
 			\filldraw (1,1) circle (4pt) node[above left] {};
 			\end{tikzpicture}},
since $p_1$ must be irreducible there.

\subsection{The pattern $X$}
In our next theorem we consider a generalization of $X$.

%Page 7 and Page 8
\begin{thm}\label{th:pattern X}
Suppose that $p$ is the pattern shown in Figure~\ref{pic-thm-pat-X}, where $p_1$ is an irreducible mesh pattern of length $\geq 2$, and the label $a$ is to be ignored. Then,
\begin{align*}		
A_p(x) & =\frac{ \big(1+xA_{p_1}(x) \big) F(x)}{1+xF(x)},\\
F_p(x,q)& =\left( 1+\sum_{i\ge 1}  x^i \prod_{j=1}^{i}\frac{F_{p_1}(x,q^j)}{1+xF_{p_1}(x,q^j)} \right) \frac{F(x)}{1+xF(x)}.
\end{align*}
\end{thm}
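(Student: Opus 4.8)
The plan is to set up a functional equation by decomposing each permutation according to its interaction with the pattern $p$, in the same spirit as the avoidance argument for the pattern $X$ but iterated to track the statistic $q$. The pattern $p$ consists of the point $a$ with the West and South boxes adjacent to it shaded (this is the $X$ part forcing $a$ to be a left-to-right minimum that is immediately followed in value by the next element), together with a copy of $p_1$ inserted in the North East box. Since $p_1$ is irreducible of length $\ge 2$, an occurrence of $p$ forces a rather rigid structure: the element $a$ together with the occurrence of $p_1$ sitting above and to the right of it. The first step is to understand the avoidance generating function $A_p(x)$, which I expect to come from the same recursive peeling as in Theorem~\ref{thm-length-1}: classify a permutation by whether its relevant extreme element participates in an occurrence, yielding a relation of the form $A_p(x) = \tfrac{F(x)}{1+xF(x)} + \tfrac{x A_{p_1}(x) F(x)}{1+xF(x)}$, i.e. the stated closed form $\bigl(1+xA_{p_1}(x)\bigr)F(x)/(1+xF(x))$.

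The heart of the matter is the distribution $F_p(x,q)$, and here the key structural observation is the nested product. I would argue that the occurrences of $p$ in a permutation organize themselves into a \emph{chain}: peeling off the outermost relevant element produces a copy of $p_1$ sitting in the North East region, and each such copy can itself be nested inside a further copy, so that at the $i$-th level of nesting every occurrence of $p_1$ has been counted $i$ times toward the exponent of $q$. This is exactly what the factor $q^j$ inside $F_{p_1}(x,q^j)$ records: an occurrence of $p_1$ at depth $j$ contributes $q^j$ because it is simultaneously an occurrence at each shallower level. Concretely, I would write $F_p(x,q)$ as a sum over the number $i\ge 0$ of nested levels, where the $i=0$ term is $\tfrac{F(x)}{1+xF(x)}$ (no occurrence) and the level-$i$ term carries a factor $x^i$ for the $i$ points $a$ peeled off, multiplied by the product $\prod_{j=1}^{i} \tfrac{F_{p_1}(x,q^j)}{1+xF_{p_1}(x,q^j)}$, where the $j$-th factor is the $p_1$-analogue of the avoidance-type expression $\tfrac{F}{1+xF}$ but with $F_{p_1}(x,q^j)$ in place of $F(x)$ to account for the substituted pattern and the $j$-fold weighting.

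To make the nesting rigorous I would proceed by induction on the number of layers, or equivalently derive a functional equation of the form
\begin{equation*}
F_p(x,q) = \frac{F(x)}{1+xF(x)} + \frac{x F_{p_1}(x,q)}{1+xF_{p_1}(x,q)}\,\widetilde{F}_p(x,q),
\end{equation*}
where $\widetilde{F}_p$ is the same generating function but with $q$ replaced by $q^2$ at the next level (reflecting that an inner occurrence is weighted one extra time). Unwinding this self-similar recursion by substituting $q\mapsto q^2\mapsto q^3\mapsto\cdots$ produces the infinite product in the statement. The main obstacle I anticipate is justifying that distinct occurrences of $p$ are counted \emph{exactly} once in the exponent of $q$ at each level, i.e. that the irreducibility of $p_1$ and the shading of the West and South boxes force the outermost element $a$ to be uniquely determined and prevent any double counting or interference between the nested copies; this is precisely where the hypothesis that $p_1$ is irreducible of length $\ge 2$ is indispensable, since it guarantees that an occurrence of $p_1$ cannot be split across the boundary between two adjacent nesting levels.
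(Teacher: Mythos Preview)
Your ``concrete'' decomposition into levels is precisely the paper's argument: one lists all occurrences $a_1,\ldots,a_i$ of $X$ in $\pi$, obtaining $X$-avoiding blocks $A_0,A_1,\ldots,A_i$, and uses irreducibility of $p_1$ to see that every occurrence of $p_1$ lies entirely in some $A_j$ and combines with each of $a_1,\ldots,a_j$ to give an occurrence of $p$. Hence $A_j$ contributes the factor $B_{p_1}(x,q^j)$, where $B_{p_1}(x,q)=F_{p_1}(x,q)/(1+xF_{p_1}(x,q))$ is the $p_1$-distribution on $X$-avoiding permutations (deriving this auxiliary identity is the one extra step the paper spells out). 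Note that your level-$i$ term should also carry a factor $B(x)$ for the block $A_0$; this is the overall factor $F(x)/(1+xF(x))$ in the statement, and it is why the avoidance formula reads $A_p(x)=B(x)(1+xA_{p_1}(x))$.

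Where your proposal has a genuine gap is the attempt to package this as a self-similar recursion with $\widetilde{F}_p$ equal to ``$F_p$ with $q$ replaced by $q^2$''. Peeling off $A_0$ and $a_1$ and writing $\pi=A_0\oplus(a_1)\oplus\pi'$, one gets $p(\pi)=p(\pi')+p_1(\pi')$, so the next step requires the \emph{joint} distribution of $p$ and $p_1$ on $\pi'$, not the distribution of $2p(\pi')$; in particular $\widetilde{F}_p\neq F_p(x,q^2)$ (matching your recursion against the stated product would force $B_{p_1}(x,q^{j+1})=B_{p_1}(x,q^{2j})$ for all $j\ge 1$, which already fails at $j=2$). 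The paper avoids this entirely by writing down the full block decomposition in one stroke rather than iterating. If you want an iterative version, the correct auxiliary object is the bivariate series $H(x,q,t)=\sum_n x^n\sum_{\pi\in S_n}q^{p(\pi)}t^{p_1(\pi)}$, which satisfies $H(x,q,t)=B_{p_1}(x,t)+xB_{p_1}(x,t)\,H(x,q,qt)$ and yields $F_p(x,q)=H(x,q,1)$ upon unwinding.
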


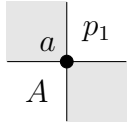
\begin{figure}[!ht]
\begin{center}
	\begin{tikzpicture}[scale=0.8, baseline=(current bounding box.center)]
	\foreach \x/\y in {0/1,1/0}		
		\fill[gray!20] (\x,\y) rectangle +(1,1);
	\draw (0.01,0.01) grid (1+0.99,1+0.99);
	\node  at (1.5,1.5) {$p_1$};
    \node at (0.5,0.5) {$A$};
    \filldraw (1,1) circle (3pt) node[above left] {$a$};
	\end{tikzpicture}
\caption{Related to the proof of Theorem~\ref{th:pattern X}.}\label{pic-thm-pat-X}
\end{center}
\end{figure}

\begin{proof}
Let $B(x)$ be the generating function for $X$-avoiding permutations. Then, it follows from Theorem~\ref{thm-length-1} that
\begin{equation}\label{th:pattern X-B(x)}
B(x)=\frac{F(x)}{1+xF(x)}.
\end{equation}
We next justify the following functional equation: \begin{equation}\label{av-pattern-X}
A_p(x)+xB(x) \big(F(x)-A_{p_1}(x)\big)=F(x).
\end{equation}
The right hand side counts all permutations. On the left hand side, $A_p(x)$ counts $p$-avoiding permutations. If a permutation contains at least one occurrence of $p$, we can consider the occurrence with the {\em leftmost} possible $a$ as shown in Figure~\ref{pic-thm-pat-X}. Referring to this figure, we note that the Noth East box must contain at least one occurrence of $p_1$, which is counted by $F(x)-A_{p_1}(x)$, and a permutation in box $A$ must be
$\pattern{scale = 0.8}{1}{1/1}{0/1,1/0}$\ -avoiding since $a$ is the leftmost. Finally, $a$ contributes the factor of $x$. This completes the proof of   \eqref{av-pattern-X}. Substituting \eqref{th:pattern X-B(x)} into \eqref{av-pattern-X},  we obtain the formula of $A_p(x)$.
	
In order to study  the distribution of $p$, we also need the distribution of $p_1$. Let $B_{p_1}(x,q)$ be the distribution of $p_1$ on $X$-avoiding permutations. Then,
   \begin{equation}\label{dis-pattern-X-p_1}
B_{p_1}(x,q)+xB_{p_1}(x,q)F_{p_1}(x,q)=F_{p_1}(x,q).
\end{equation}
This equation is obtained by considering $X$-avoiding permutations separately from the other permutations, and only the second term on the left hand side, corresponding to permutations with at least one occurrence of $X$, requires a justification.
Consider the occurrence of $X$ with the leftmost $a$ as in Figure~\ref{pic-thm-pat-X}. Any permutation in box $A$ is $X$-avoiding and thus contributes the factor of $B_{p_1}(x,q)$. Since $p_1$ is irreducible, the two non-shaded boxes in Figure~\ref{pic-thm-pat-X} are independent from each other. In other words, an occurrence of $p_1$ cannot start in $A$ and end in the other non-shaded box. Thus,  the contribution of the North East box is the factor of $F_{p_1}(x,q)$ in \eqref{dis-pattern-X-p_1}. This, along with the factor of $x$ corresponding to $a$, completes the proof of \eqref{dis-pattern-X-p_1}. Therefore, it follows from  \eqref{dis-pattern-X-p_1} that
\begin{equation}\label{dis-pattern-X-p_1-2}
B_{p_1}(x,q)=\frac{F_{p_1}(x,q)}{1+xF_{p_1}(x,q)}.
\end{equation}

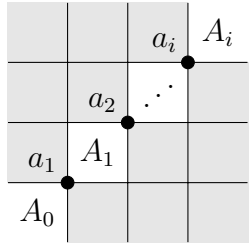
\begin{figure}[!ht]
\begin{center}
	\begin{tikzpicture}[scale=0.8, baseline=(current bounding box.center)]
	\foreach \x/\y in {0/1,0/2,0/3,1/0,1/2,1/3,2/0,2/1,2/3,3/0,3/1,3/2}		 	\fill[gray!20] (\x,\y) rectangle +(1,1);
	\draw (0.01,0.01) grid (3+0.99,3+0.99);
	 \filldraw (1,1) circle (3pt) node[above left]{$a_1$};
	 \filldraw (2,2) circle (3pt) node[above left]{$a_2$};
 	 \filldraw (3,3) circle (3pt) node[above left]{$a_i$};
 	 \node  at (0.5,0.5) {$A_0$};
 	 \node  at (1.5,1.5) {$A_1$};
     \node  at (2.5,2.6) {$\iddots$};
 	 \node  at (3.5,3.5) {$A_i$};
	\end{tikzpicture}
	\caption{Related to the proof of Theorem~\ref{th:pattern X}.}\label{pic-thm-pat-X-2}
\end{center}
\end{figure}

Now we consider the distribution of $p$. Claim that
\begin{equation}\label{dis-pattern-X}
B(x)+B(x)\sum_{i\ge 1} x^i \prod_{j=1}^{i}B_{p_1}(x,q^j)=F_{p}(x,q).
\end{equation}
Indeed, each permutation, counted by $F_{p}(x,q)$ on the right hand side, either avoids $X$, which is counted by $B(x)$ on the left hand side, or contains at least one occurrence of $X$. In the latter case, suppose $a_1$, $a_2,\ldots,a_i$ are all occurrences of $X$ in a permutation as shown in Figure~\ref{pic-thm-pat-X-2}, so all the boxes $A_j$,  where $0\leq j\leq i$, are $X$-avoiding.
Our key observation is that any occurrence of $p_1$ in a box $A_j$, where $1\leq j\leq i$, together with each of the $a_k$, where $1\leq k\leq j$, contribute an occurrence of $p$. Thus, the contribution of $A_j$ is $F_{p_1}(x,q^j)$ for $1\leq j\leq i$, and the contribution of $A_0$ is $B(x)$. Finally, $x^i$ is given by $a_i$'s and we can sum over all $i\geq 1$.  This completes the proof of \eqref{dis-pattern-X}.
Substituting the formulas of $B(x)$ and $B_{p_1}(x,q)$  into \eqref{dis-pattern-X}   gives the formula of $F_p(x,q)$. This completes the proof.
\end{proof}

Note that $p_1$ in Theorem~\ref{th:pattern X} must be of length $\geq 2$ because the result does not work for $p_1=$$\pattern{scale=0.8}{1}{1/1}{}$. In the latter case though we deal with the pattern Nr.\ 16 $= \pattern{scale = 0.6}{2}{1/1,2/2}{0/1,0/2,1/0,2/0}$, whose avoidance and distribution are solved in \cite{SZ}.

\subsection{The pattern Nr. 33}
In our next theorem we obtain a generalization of the pattern  Nr. 33 = $\pattern{scale=0.6}{2}{1/1,2/2}{0/1,0/2,1/0,2/0,1/2,2/1}$.

\begin{thm}\label{th:pattern 33}
Suppose that $p$ is the pattern shown in Figure~\ref{pic-thm-pat-33}, where $p_1$ is an irreducible mesh pattern of length $\geq 2$, and the labels $a$, $b$, $A$, and $B$ are to be ignored.
Then,
%Page 22

\begin{align*}		
A_p(x)&=\frac{\big( 1+x^2A_{p_1}(x) \big)F(x)}{1+x^2F(x)},\\[6pt]
F_p(x,q)&=\frac{F(x)}{1+xF(x)}+\left(\frac{F(x)}{1+xF(x)}\right)^2 \left(x+\sum_{i\ge 2} x^i  \prod_{j=2}^{i}\frac{F_{p_1}(x,q^{\binom{j}{2}})}{1+xF_{p_1}(x,q^{\binom{j}{2}})} \right).
\end{align*}
\end{thm}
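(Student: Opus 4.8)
The plan is to treat $p$ as a \emph{two-anchor} version of the pattern $X$ and to follow the proof of Theorem~\ref{th:pattern X} as closely as possible, the only genuinely new feature being that the two dots $a,b$ of Figure~\ref{pic-thm-pat-33} now pin down a \emph{pair} of staircase elements instead of a single one. I would begin by recording the two auxiliary facts that power the whole argument. First, by Theorem~\ref{thm-length-1}, the generating function for $X$-avoiding permutations is $B(x)=F(x)/(1+xF(x))$. Second, exactly as in the derivation of \eqref{dis-pattern-X-p_1-2}, the distribution of $p_1$ over $X$-avoiding permutations is $B_{p_1}(x,q)=F_{p_1}(x,q)/(1+xF_{p_1}(x,q))$; this computation uses that $p_1$ is irreducible, so that no occurrence of $p_1$ can straddle a decomposition point of an $X$-avoiding permutation, and I would reuse it verbatim.

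The core of the argument is the staircase decomposition for $F_p(x,q)$, organised exactly as in Figure~\ref{pic-thm-pat-X-2}. Let $a_1<a_2<\cdots<a_i$ be the complete list of $X$-elements (the splitting fixed points) of a permutation, read along the main diagonal; they cut the permutation into blocks $A_0,A_1,\dots,A_i$, each of which is $X$-avoiding. The combinatorial key, which I would establish first, has two parts: (i) \emph{any} two anchors $a_k<a_l$ form an occurrence of the skeleton of pattern Nr.~$33$, since $a_k$ is a left-to-right maximum, $a_l$ is a right-to-left minimum, and everything lying between them in position automatically lies between them in value; and (ii) an occurrence of $p_1$ sitting inside a block $A_j$ is North East of precisely the anchors $a_1,\dots,a_j$, so together with each of the $\binom{j}{2}$ pairs $(a_k,a_l)$ with $k<l\le j$ it yields a distinct occurrence of $p$, and every occurrence of $p$ arises this way. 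Hence the block $A_j$ contributes the factor $B_{p_1}\!\big(x,q^{\binom{j}{2}}\big)$, with $A_0$ and $A_1$ both contributing $B(x)$ because $\binom{0}{2}=\binom{1}{2}=0$, while the $i$ anchors contribute $x^{i}$.

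Collecting the product over the blocks and summing over the number $i$ of anchors then gives
\[
F_p(x,q)=B(x)+B(x)^2\Big(x+\sum_{i\ge 2}x^{i}\prod_{j=2}^{i}B_{p_1}\big(x,q^{\binom{j}{2}}\big)\Big),
\]
and substituting the closed forms of $B(x)$ and $B_{p_1}(x,q)$ recovered above produces the claimed expression for $F_p(x,q)$. The avoidance formula then drops out either by specialising $q=0$, so that $F_{p_1}\big(x,q^{\binom{j}{2}}\big)$ collapses to $A_{p_1}(x)$ for every $j\ge 2$, or, in the style of \eqref{av-pattern-X} and Theorem~\ref{th:pattern X}, by a direct leftmost-occurrence functional equation in which the box $A$ below and to the left of $a$ is forced to avoid the skeleton, the central box $B$ is free, and the North East box must carry at least one occurrence of $p_1$, contributing the factor $F(x)-A_{p_1}(x)$.

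The step I expect to be the main obstacle is the airtight justification of item (ii). One must check that a single occurrence of $p_1$ in $A_j$ completes \emph{exactly} the pairs $(a_k,a_l)$ with $l\le j$ and no others; that irreducibility of $p_1$ confines every $p_1$-occurrence to one block $A_j$, so that the blocks are truly independent and the $\binom{j}{2}$ induced copies of $p$ never interact across the staircase; and that each occurrence of $p$ is produced in exactly one way under the correspondence ``occurrence of $p$ $\longleftrightarrow$ (ordered pair of anchors, $p_1$-occurrence in a single higher block)''. Once this correspondence and its $\binom{j}{2}$ multiplicity are pinned down, the remaining generating-function bookkeeping is entirely routine.
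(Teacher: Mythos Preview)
Your proposal is correct and follows essentially the same route as the paper: the staircase decomposition by the $X$-elements $a_1<\cdots<a_i$, the observation that a $p_1$-occurrence confined (by irreducibility) to block $A_j$ pairs with each of the $\binom{j}{2}$ anchor pairs $(a_k,a_l)$ with $k<l\le j$, and the resulting identity $F_p(x,q)=B(x)+xB(x)^2+B(x)^2\sum_{i\ge 2}x^i\prod_{j=2}^{i}B_{p_1}(x,q^{\binom{j}{2}})$ are exactly the paper's equation~\eqref{dis-pattern-33}. One small correction to your alternative ``direct'' avoidance argument: choosing the leftmost $a$ and then the leftmost $b$ forces $a=a_1$ and $b=a_2$, so \emph{both} boxes $A$ and $B$ are $X$-avoiding (this is the paper's equation~\eqref{av-pattern-33}), not ``$A$ avoids the skeleton, $B$ free''; your $q=0$ specialisation is the safer route and agrees with~\eqref{av-pattern-33}.
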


\begin{figure}[!ht]
\begin{center}
	\begin{tikzpicture}[scale=0.8, baseline=(current bounding box.center)]
	\foreach \x/\y in {0/1,0/2,1/0,2/0,1/2,2/1}		
	\fill[gray!20] (\x,\y) rectangle +(1,1);
	\draw (0.01,0.01) grid (2+0.99,2+0.99);
	\filldraw (1,1) circle (3pt) node[above left] {$a$};
	\filldraw (2,2) circle (3pt) node[above left] {$b$};
	\node at (2.5,2.5) {$p_1$};
    \node at (0.5,0.5) {$A$};
    \node at (1.5,1.5) {$B$};
	\end{tikzpicture}
\caption{Related to the proof of Theorem~\ref{th:pattern 33}}\label{pic-thm-pat-33}
\end{center}
\end{figure}
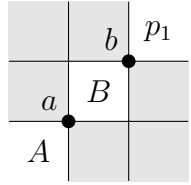

%\begin{figure}[!ht]
%\begin{center}
%	\begin{tikzpicture}[scale=1, baseline=(current bounding box.center)]
%	\foreach \x/\y in {0/1,0/2,0/3,1/0,1/2,1/3,2/0,2/1,2/3,3/0,3/1,3/2}		 	\fill[gray!20] (\x,\y) rectangle +(1,1);
%	\draw (0.01,0.01) grid (3+0.99,3+0.99);
%	 \filldraw (1,1) circle (2pt) node[above left]{$a_1$};
%	 \filldraw (2,2) circle (2pt) node[above left]{$a_2$};
%	 \filldraw (3,3) circle (2pt) node[above left]{$a_i$};
%	 \node  at (0.5,0.5) {$A_0$};
% 	 \node  at (1.5,1.5) {$A_1$};
%     \node  at (2.5,2.6) {$\iddots$};
% 	 \node  at (3.5,3.5) {$A_i$};
%	\end{tikzpicture}
%	\caption{Related to the proof of Theorem~\ref{th:pattern 33}}\label{pic-thm-pat-33-2}
%\end{center}
%\end{figure}

\begin{proof}
The case of avoidance is similar to our considerations of the pattern $X=\pattern{scale = 0.8}{1}{1/1}{0/1,1/0}$. We assume that the elements $a$ and $b$ in an occurrence of $p$ in Figure~\ref{pic-thm-pat-33} are {\em leftmost} possible. But then, the boxes $A$ and $B$ will be $X$-avoiding. Thus, we have the following functional equation:
\begin{equation}\label{av-pattern-33}
A_p(x)+x^2B^2(x) \big(F(x)-A_{p_1}(x) \big)=F(x).
\end{equation}
Substituting  \eqref{th:pattern X-B(x)} for $B(x)$  into \eqref{av-pattern-33}, we obtain the formula of $A_p(x)$.

For the distribution, we have the following functional equation:
\begin{equation}\label{dis-pattern-33}
B(x)+x B^2(x) + B^2(x)\sum_{i\ge 2} x^i \prod_{j=2}^{i}B_{p_1}(x,q^{\binom{j}{2}})=F_{p}(x,q),
\end{equation}
where $B_{p_1}(x,q)$ is given in \eqref{dis-pattern-X-p_1-2}. Our proof of \eqref{dis-pattern-33} is essentially the same as that of \eqref{dis-pattern-X} and we omit it. The only difference is that here we have two $X$-avoiding boxes $A$ and $B$ resulting in the factor of $B^2(x)$ instead of $B(x)$.
Substituting  \eqref{dis-pattern-X-p_1-2}  for  $B_{p_1}(x,q)$ into \eqref{dis-pattern-33}, we obtain the desired formula of $F_p(x,q)$, and thus complete the proof.
\end{proof}

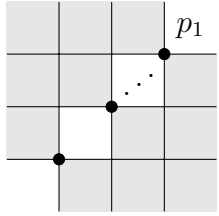
\begin{figure}[!ht]
\begin{center}
	\begin{tikzpicture}[scale=0.7, baseline=(current bounding box.center)]
	\foreach \x/\y in {0/1,0/2,1/0,1/2,2/0,2/1,0/3,1/3,2/3,3/0,3/1,3/2}
	\fill[gray!20] (\x,\y) rectangle +(1,1);
	\draw (0.01,0.01) grid (3+0.99,3+0.99);
	\filldraw (1,1) circle (3pt) node[above right] {};
    \filldraw (2,2) circle (3pt) node[above right] {};
    \filldraw (3,3) circle (3pt) node[above right] {};
    \node at (3.5,3.5) {$p_1$};
    \node at (2.5,2.6) {$\iddots$};
	\end{tikzpicture}
	\caption{Related to Theorem~\ref{th:pattern 33-2}}\label{pic-thm-pat-33-2}
\end{center}
\end{figure}

The proof of the following theorem follows  similar steps to those in Theorem~\ref{th:pattern 33}, and thus is omitted. We note that Theorem~\ref{th:pattern 33-2} is a far-reaching generalization of Theorem~\ref{th:pattern 33}.

%Page23
\begin{thm}\label{th:pattern 33-2}
Suppose that $p$ is the pattern shown in Figure~\ref{pic-thm-pat-33-2}, where $p_1$ is an irreducible mesh pattern. Let pattern $X=\pattern{scale = 0.8}{1}{1/1}{0/1,1/0}$.
Then the distribution of $p$ is
\begin{align*}		
F_p(x,q)=\sum_{i=1}^{k}x^{i-1}\left(\frac{F(x)}{1+xF(x)}\right)^i+\left(\frac{F(x)}{1+xF(x)}\right)^k \sum_{i\ge k} x^i \prod_{j=k}^{i}\frac{F_{p_1}(x,q^{\binom{j}{k}})}{1+xF_{p_1}(x,q^{\binom{j}{k}})}.
\end{align*}
\end{thm}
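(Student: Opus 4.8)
The plan is to generalize the argument for Theorem~\ref{th:pattern 33} by replacing the length-$2$ monotone ``staircase'' $ab$ (with $a,b$ forming an occurrence of $X$) by a length-$k$ monotone staircase $a_1a_2\cdots a_k$, where each consecutive pair $a_ja_{j+1}$ forms an occurrence of $X=\pattern{scale = 0.8}{1}{1/1}{0/1,1/0}$. Concretely, in Figure~\ref{pic-thm-pat-33-2} the pattern $p$ has an increasing sequence of $k$ marked points, with the mesh forcing the same kind of local ``$X$-structure'' between consecutive points that we exploited for Nr.~33; the box $p_1$ sits in the top-right corner. First I would set $B(x)=\frac{F(x)}{1+xF(x)}$ as in \eqref{th:pattern X-B(x)} and recall from \eqref{dis-pattern-X-p_1-2} that the distribution of $p_1$ over $X$-avoiding permutations is $B_{p_1}(x,q)=\frac{F_{p_1}(x,q)}{1+xF_{p_1}(x,q)}$; these are the two building blocks, and they are already available.

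The core combinatorial step is to choose, in any permutation containing $p$, the occurrence of the initial monotone staircase $a_1\cdots a_k$ with all of $a_1,\dots,a_k$ taken \emph{leftmost} (equivalently, to decompose along the maximal increasing run of $X$-occurrences, exactly as in Figure~\ref{pic-thm-pat-X-2} but now requiring a chain of length at least $k$). I would argue that the mesh shading isolates $k$ independent $X$-avoiding boxes $A_0,A_1,\dots,A_{k-1}$ sitting below/left of the staircase, which jointly contribute $B(x)^k$, while any tail of additional $X$-occurrences $a_k,a_{k+1},\dots,a_i$ produces boxes $A_k,\dots,A_i$ on the diagonal. The key multiplicity observation—the analogue of the ``$q^j$'' and ``$q^{\binom{j}{2}}$'' exponents in the two proved theorems—is that an occurrence of $p_1$ placed in the diagonal box $A_j$ can be completed to an occurrence of $p$ by selecting any $k$ of the staircase elements $a_1,\dots,a_j$ lying weakly to its lower-left; there are $\binom{j}{k}$ such choices, so that box contributes $F_{p_1}\!\left(x,q^{\binom{j}{k}}\right)$ and, being $X$-avoiding in its own right, the corresponding factor is $B_{p_1}\!\left(x,q^{\binom{j}{k}}\right)=\frac{F_{p_1}(x,q^{\binom{j}{k}})}{1+xF_{p_1}(x,q^{\binom{j}{k}})}$. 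Summing over the length $i\ge k$ of the full chain, weighting by $x^i$ for the marked points, and adding the ``short'' cases where the chain has length $i<k$ (so $p$ cannot occur and the permutation is counted by $x^{i-1}B(x)^i$), yields the claimed two-part formula.

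I would therefore assemble the functional identity
\[
F_p(x,q)=\sum_{i=1}^{k}x^{i-1}B(x)^i+B(x)^k\sum_{i\ge k}x^i\prod_{j=k}^{i}B_{p_1}\!\left(x,q^{\binom{j}{k}}\right),
\]
and then substitute $B(x)=\frac{F(x)}{1+xF(x)}$ and \eqref{dis-pattern-X-p_1-2} to recover the stated expression; the specialization $k=2$ (where $\binom{j}{2}$ reproduces the Nr.~33 exponents) is a sanity check against Theorem~\ref{th:pattern 33}.

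The main obstacle I anticipate is justifying the exponent $\binom{j}{k}$ rigorously, i.e.\ verifying that, once $p_1$ is fixed in box $A_j$, \emph{every} $k$-subset of the lower staircase points $a_1,\dots,a_j$ together with that $p_1$-occurrence really forms a valid occurrence of the length-$k$ pattern $p$—with no spurious points intruding into the shaded cells and no double-counting across different choices of the ``leftmost'' decomposition. This is exactly where the independence of the boxes (guaranteed because $p_1$ is irreducible, so an occurrence of $p_1$ cannot straddle a box boundary, as used in the proof of Theorem~\ref{th:pattern X}) and the precise shading in Figure~\ref{pic-thm-pat-33-2} must be combined; the rest of the argument is a routine repetition of the Nr.~33 computation, which is why the authors are comfortable omitting it.
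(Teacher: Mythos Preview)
Your proposal is correct and follows exactly the approach the paper has in mind: decompose every permutation along its full chain of $X$-occurrences $a_1,\dots,a_i$ into $X$-avoiding blocks $A_0,\dots,A_i$, observe (using irreducibility of $p_1$) that each occurrence of $p_1$ in $A_j$ pairs with any $k$-subset of $\{a_1,\dots,a_j\}$ to give an occurrence of $p$, and hence weight $A_j$ by $B_{p_1}\!\big(x,q^{\binom{j}{k}}\big)$. The only slip is a harmless index shift in your prose for the ``short'' cases (a chain with $i$ points contributes $x^{i}B(x)^{i+1}$, which after reindexing is your $x^{i-1}B(x)^{i}$ summed over $1\le i\le k$); the assembled identity and the substitutions are exactly those implicit in the paper's omitted proof.
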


\section{Remaining avoidance cases}\label{sec5}

In this section, we study avoidance of generalizations of the patterns:\\

Nr. 27 = $\pattern{scale=0.6}{2}{1/1,2/2}{0/1,0/2,1/0,1/1,2/0,2/2}$, \quad
Nr. 28 = $\pattern{scale=0.6}{2}{1/1,2/2}{0/0,0/1,1/0,1/2,2/1,2/2}$, \quad
Nr. 30 = $\pattern{scale=0.6}{2}{1/1,2/2}{0/1,0/2,1/0,1/1,1/2,2/0,2/1}$, \quad
Nr. 34 = $\pattern{scale=0.6}{2}{1/1,2/2}{0/0,0/1,1/0,1/1,1/2,2/1,2/2}$.

\subsection{Pattern Nr. 28}
The following theorem allows us to generalize the known avoidance result~\cite{SZ} for the pattern Nr. 28 = $\pattern{scale=0.6}{2}{1/1,2/2}{0/0,0/1,1/0,1/2,2/1,2/2}$ by inserting two mesh patterns, $p_1$ and $p_2$, into it. We were unable to find the distribution of the pattern in the next theorem because of difficulties of controlling occurrences of the patterns $p$ and $p_2$, at the same time, in the rightmost bottom box.

%Page 19
\begin{thm}\label{th:pattern 28-2}
Let $p$ be the pattern shown in Figure~\ref{pic-thm-pat-28-2}, where the labels $a$, $b$, and $A$ are to be ignored, and $p_1$ and $p_2$ are  any mesh patterns. Then, the avoidance of $p$ is given by
\begin{align*}		
A_p(x)=\frac{F(x)+x^2F(x)A_{p_2}(x) \big(F(x)-A_{p_1}(x) \big)}{1+x^2 \big(F(x)-A_{p_1}(x) \big) F(x)}.
\end{align*}
\end{thm}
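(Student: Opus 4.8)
The plan is to mimic the avoidance argument for Theorem~\ref{th:pattern 28}, setting up a single functional equation that decomposes an arbitrary permutation according to whether it avoids $p$ or not, and in the latter case fixing a canonical occurrence of the pair $ab$. First I would recall the pattern in Figure~\ref{pic-thm-pat-28-2}: it is the generalization of Nr.~28 in which the central box carries $p_1$ and the rightmost bottom box carries $p_2$, with an additional unshaded box labeled $A$ (the upper-left region) that must be $p$-avoiding. As in the proof of \eqref{av-pattern-28}, among all occurrences of $p$ I would select the one whose element $a$ is \emph{leftmost}. This choice forces $b$ to be uniquely determined, makes the box $A$ (to the upper left of $a$) necessarily $p$-avoiding, and leaves the middle box to contribute an occurrence of $p_1$ (giving the factor $F(x)-A_{p_1}(x)$) while the rightmost bottom box now contributes occurrences governed by $p_2$-avoidance rather than by $F(x)$.

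\textbf{Key steps.}
The central task is to write the contribution of the ``contains an occurrence'' case as a product of independent factors. Reading off Figure~\ref{pic-thm-pat-28-2}, I would argue that (i) the elements $a$ and $b$ contribute $x^2$; (ii) the box $A$, being forced to avoid $p$ by the leftmost choice of $a$, contributes $A_p(x)$; (iii) the middle box must contain at least one occurrence of $p_1$, contributing $F(x)-A_{p_1}(x)$; and (iv) the rightmost bottom box contributes $A_{p_2}(x)$ rather than $F(x)$, because the enumeration here tracks avoidance of the overall pattern $p$ and that box is constrained to avoid $p_2$ in the minimal occurrence. Assembling these, the functional equation I expect is
\begin{equation*}
A_p(x)+x^2 A_p(x)\big(F(x)-A_{p_1}(x)\big)F(x)
 = F(x)+x^2 A_p(x)\big(F(x)-A_{p_1}(x)\big)\big(F(x)-A_{p_2}(x)\big),
\end{equation*}
which, after collecting the $A_p(x)$ terms, rearranges to
\begin{equation*}
A_p(x)\Big(1+x^2\big(F(x)-A_{p_1}(x)\big)A_{p_2}(x)\Big)
 = F(x)+x^2 F(x) A_{p_2}(x)\big(F(x)-A_{p_1}(x)\big),
\end{equation*}
and hence to the claimed closed form once one verifies that the coefficient of $A_p(x)$ simplifies to $1+x^2(F(x)-A_{p_1}(x))F(x)$ as in Theorem~\ref{th:pattern 28}. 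I would then solve the linear equation for $A_p(x)$ directly.

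\textbf{Main obstacle.}
The delicate point—and the reason, as the authors remark, that only the avoidance and not the full distribution is obtained—is the simultaneous bookkeeping in the rightmost bottom box. That box must be controlled for occurrences of $p_2$ \emph{and} for occurrences of the large pattern $p$ at the same time, and these two constraints interact: an occurrence of $p$ can straddle the $p_2$-region together with the anchor $ab$. The hard part will therefore be to justify cleanly that, under the leftmost-$a$ convention, the contributions factor into $A_{p_2}(x)$ (tracking $p_2$-avoidance) against a separate $A_p(x)$ or $F(x)$ factor without double-counting or missing straddling occurrences; getting the $F(x)$ versus $A_{p_2}(x)$ substitution exactly right in the two sides of the equation is precisely where one must argue most carefully, and it is what blocks the analogous distribution statement.
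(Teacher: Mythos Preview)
Your proposal contains several genuine errors and does not arrive at the stated formula.

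First, the contribution of the rightmost bottom box is backwards: for $ab$ to be an occurrence of $p$, that box must \emph{contain} at least one occurrence of $p_2$, not avoid it, so its factor is $F(x)-A_{p_2}(x)$ rather than $A_{p_2}(x)$.

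Second, and more fundamentally, you have the wrong constraint on box $A$. Under the leftmost-$a$ convention, any candidate $a'b'$ with $a'$ to the left of $a$ necessarily lies entirely inside $A$, and such an $a'b'$ is an occurrence in $A$ of the pattern of Theorem~\ref{th:pattern 28} (Nr.~28 with $p_1$ inserted but \emph{without} the $p_2$ requirement) if and only if it is one in the whole permutation. The paper therefore first invokes Theorem~\ref{th:pattern 28} to obtain the generating function
\[
A(x)=\frac{F(x)}{1+x^2\big(F(x)-A_{p_1}(x)\big)F(x)}
\]
for permutations avoiding that simpler pattern, and uses $A(x)$ --- not $A_p(x)$ --- as the contribution of box $A$. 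The resulting functional equation is simply
\[
A_p(x)+x^2\, A(x)\big(F(x)-A_{p_1}(x)\big)\big(F(x)-A_{p_2}(x)\big)=F(x),
\]
and substituting the expression for $A(x)$ yields the claimed closed form directly. Using $A_p(x)$ for box $A$ cannot work: avoiding the full pattern $p$ is a weaker condition than avoiding the Nr.~28$+p_1$ pattern, so it does not capture the leftmost-$a$ constraint.

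Finally, your displayed functional equation has no coherent combinatorial reading (neither side decomposes the set of all permutations), and your algebra is incorrect: collecting the $A_p(x)$ terms in your first equation actually gives
\[
A_p(x)\big(1+x^2(F-A_{p_1})A_{p_2}\big)=F(x),
\]
not the identity you wrote, and the proposed ``verification'' that the coefficient simplifies to $1+x^2(F-A_{p_1})F$ would require $A_{p_2}(x)=F(x)$, which is false for any nontrivial $p_2$.
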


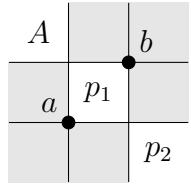
\begin{figure}[!ht]
\begin{center}
	\begin{tikzpicture}[scale=0.8, baseline=(current bounding box.center)]
	\foreach \x/\y in {0/0,0/1,1/0,1/2,2/1,2/2}		
	\fill[gray!20] (\x,\y) rectangle +(1,1);
	\draw (0.01,0.01) grid (2+0.99,2+0.99);
	\filldraw (1,1) circle (3pt) node[above left] {$a$};
	\filldraw (2,2) circle (3pt) node[above right] {$b$};
	\node at (0.5,2.5) {$A$};
    \node at (1.5,1.5) {$p_1$};
	\node at (2.5,0.5) {$p_2$};
	\end{tikzpicture}
\caption{Related to the proof of Theorem~\ref{th:pattern 28-2}}\label{pic-thm-pat-28-2}
\end{center}
\end{figure}

\begin{proof}
Let $A(x)$ be the generating function for the number of permutations avoiding  the  pattern
\raisebox{0.6ex}{\begin{tikzpicture}[scale = 0.3, baseline=(current bounding box.center)]
 			\foreach \x/\y in {0/0,0/1,1/0,1/2,2/1,2/2}		
 			\fill[gray!20] (\x,\y) rectangle +(1,1);
 			\draw (0.01,0.01) grid (2+0.99,2+0.99);
 			\node  at (1.6,1.6) {\tiny $p_1$};
 			\filldraw (1,1) circle (4pt) node[above left] {};
            \filldraw (2,2) circle (4pt) node[above left] {};
 			\end{tikzpicture}}.
Then, it follows from  Theorem~\ref{th:pattern 28} that
$$A(x)=\frac{F(x)}{1+x^2(F(x)-A_{p_1}(x))F(x)}.$$
We have the following functional equation:
\begin{equation}\label{av-pattern-28-2}
A_p(x)+x^2 A(x)\big(F(x)-A_{p_1}(x) \big) \big(F(x)-A_{p_2}(x) \big)=F(x).
\end{equation}
Indeed, each permutation, counted by $F(x)$ on the right hand side, either avoids $p$, counted by $A_p(x)$, or contains at least one occurrence of $p$. In the latter case, among all such occurrences, we can pick the occurrence $ab$ with the \emph{leftmost} possible $a$ as shown in Figure~\ref{pic-thm-pat-28-2}. Referring to this figure, we note that the central box must contain at least one occurrence of $p_1$,which is counted by $F(x)-A_{p_1}(x)$, and the rightmost bottom box must contain at least one occurrence of $p_2$, counted by $F(x)-A_{p_2}(x)$. Moreover, the permutation in box $A$ must avoid the pattern
\raisebox{0.6ex}{\begin{tikzpicture}[scale = 0.3, baseline=(current bounding box.center)]
 			\foreach \x/\y in {0/0,0/1,1/0,1/2,2/1,2/2}		
 			\fill[gray!20] (\x,\y) rectangle +(1,1);
 			\draw (0.01,0.01) grid (2+0.99,2+0.99);
 			\node  at (1.6,1.6) {\tiny $p_1$};
 			\filldraw (1,1) circle (4pt) node[above left] {};
            \filldraw (2,2) circle (4pt) node[above left] {};
 			\end{tikzpicture}},
which is counted by $A(x)$, since $a$ is the leftmost possible element in an occurrence of $p$. Finally, $a$ and $b$ contribute the factor of $x^2$. This completes our proof of   \eqref{av-pattern-28-2} and hence gives the formula of $A_p(x)$ as desired.
\end{proof}

\begin{remark} Note that exactly the same enumeration result as that in Theorem~\ref{th:pattern 28-2} holds for the pattern \raisebox{0.6ex}{\begin{tikzpicture}[scale=0.3, baseline=(current bounding box.center)]
	\foreach \x/\y in {0/0,0/1,1/0,1/2,2/1,2/2}		
	\fill[gray!20] (\x,\y) rectangle +(1,1);
	\draw (0.01,0.01) grid (2+0.99,2+0.99);
	\filldraw (1,1) circle (4pt) node[above left] {};
	\filldraw (2,2) circle (4pt) node[above right] {};
	\node at (1.5,1.5) {\tiny $p_1$};
	\node at (0.5,2.5) {\tiny $p_2$};
	\end{tikzpicture}}.
Indeed, in the proof of Theorem~\ref{th:pattern 28-2} one just essentially need to substitute the ``leftmost $a$'' by the ``rightmost $b$''.\end{remark}

\subsection{Pattern Nr. 30}
We next consider generalizations of the pattern
Nr. 30 = $\pattern{scale=0.6}{2}{1/1,2/2}{0/1,0/2,1/0,1/1,1/2,2/0,2/1}$.

\begin{lem}[{\cite[Theorem 3.5]{SZ}}]\label{lem-pattern-30}
Let $p=\pattern{scale=0.6}{2}{1/1,2/2}{0/1,1/2,2/0,1/0,1/1,2/1,0/2}$,
	$F(x,q) =\sum_{n\geq 0}x^n\sum_{\pi\in S_n}q^{p(\pi)}$, and $A(x)$ be the generating function for $S(p)$. Then,
	\begin{align*}
	A(x)=\frac{(1+x)F(x)}{1+x+x^2F(x)},
	\ \ \ \ \
	F(x,q)=\frac{(1+x-qx)F(x)}{1+(1-q)x+(1-q)x^2F(x)}.
	\end{align*}
\end{lem}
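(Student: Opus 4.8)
The plan is to first give a transparent combinatorial description of the occurrences of $p$, and then to compute $F_p(x,q)$ by decomposing each permutation into its direct-sum-indecomposable blocks. Throughout, $p$ is the pattern Nr.~30 of the lemma, whose only unshaded cells are the lower-left box $\boks{0}{0}$ and the upper-right box $\boks{2}{2}$.

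For the first step, suppose a subsequence $\pi_i\pi_j$ (with $i<j$ and $\pi_i<\pi_j$) is an occurrence. The three shaded cells $\boks{1}{0},\boks{1}{1},\boks{1}{2}$ of the middle column force $j=i+1$; the cells $\boks{0}{1},\boks{0}{2}$ force every entry left of position $i$ to lie below $\pi_i$, while $\boks{2}{0},\boks{2}{1}$ force every entry right of position $i+1$ to lie above $\pi_{i+1}$. Since no entry may have value strictly between $\pi_i$ and $\pi_{i+1}$, we get $\pi_{i+1}=\pi_i+1$; and since the $\pi_i-1$ entries below $\pi_i$ must all precede position $i$ while the $i-1$ positions before $i$ are all occupied by values below $\pi_i$, we obtain $\pi_i=i$ and $\{\pi_1,\dots,\pi_{i-1}\}=\{1,\dots,i-1\}$. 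Hence an occurrence at position $i$ is exactly a pair of consecutive singleton blocks in the decomposition of $\pi$ into direct-sum-indecomposable blocks, so $p(\pi)$ equals the number of adjacent pairs of singleton blocks in that decomposition.

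For the second step, recall that every permutation is uniquely an ordered sequence of indecomposable blocks, so $F(x)=1/(1-I(x))$ with $I(x)=(F(x)-1)/F(x)$ the generating function of the nonempty indecomposables; write $J(x):=I(x)-x$ for those of length $\ge 2$. Each block is a singleton (generating function $x$) or a big block (generating function $J(x)$), and by the previous paragraph the weight of a block sequence is the product of block generating functions times $q$ to the number of adjacent singleton--singleton pairs, so summing over all sequences produces $F_p(x,q)$. Tracking the type of the last block, let $P_S$ and $P_B$ be the generating functions for nonempty sequences ending in a singleton, resp.\ in a big block, and set $T:=F_p(x,q)=1+P_S+P_B$. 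Since appending a singleton to a sequence already ending in a singleton creates one new pair, one gets
\[
P_B = TJ, \qquad P_S = x\,(1+P_B+qP_S).
\]

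Eliminating $P_S$ and $P_B$ from $T=1+P_S+P_B$ then yields
\[
T=\frac{1+x-qx}{\,1-qx-J(1+x-qx)\,},
\]
and substituting $J=(F-1-xF)/F$, multiplying numerator and denominator by $F$, and simplifying collapses the denominator to $1+(1-q)x+(1-q)x^2F(x)$, which gives the stated formula for $F(x,q)$; setting $q=0$ recovers $A(x)$. The one genuinely creative step is the combinatorial translation of the second paragraph: once occurrences are identified with adjacent singleton blocks, the remainder is a routine two-state transfer computation followed by a short algebraic simplification.
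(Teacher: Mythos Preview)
Your proof is correct. Note, however, that the paper does not itself supply a proof of this lemma: it is quoted verbatim from~\cite[Theorem~3.5]{SZ}, so there is no in-paper argument to compare against.

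That said, your approach differs in spirit from the style of argument used throughout both this paper and the cited source. Those proofs typically fix an extremal occurrence of the pattern (e.g., the leftmost possible $a$) and read off a functional equation for $A_p(x)$ or $F_p(x,q)$ directly from the resulting block structure; solving that linear equation gives the closed form. Your argument instead first \emph{classifies} all occurrences combinatorially---identifying $p(\pi)$ with the number of adjacent singleton blocks in the direct-sum decomposition of $\pi$---and then computes the bivariate generating function via a two-state transfer recursion on block sequences. What your route buys is a transparent combinatorial meaning for the statistic $p(\pi)$ before any algebra begins, and it makes the appearance of the indecomposable generating function $I(x)=(F(x)-1)/F(x)$ completely natural. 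The cost is that the algebra at the end (substituting $J=I-x$ and simplifying) is slightly longer than solving a single linear functional equation. Both methods are short and self-contained; yours is arguably more illuminating as to \emph{why} the formula takes the shape it does.
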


The avoidance of the patterns in the next two theorems is based on Lemma~\ref{lem-pattern-30}, but we were not able to derive the distribution of these patterns extending the respective formula in Lemma~\ref{lem-pattern-30}.

\begin{thm}\label{th:pattern 30}
Suppose that $p$ is the pattern shown in Figure~\ref{pic-thm-pat-30}, where $p_1$ is a mesh pattern with the leftmost bottom box non-shaded, and the labels $a$, $b$, and $A$ are to be ignored. Then, the avoidance of $p$ is given by
\begin{align*}		
A_p(x)=  \frac{(1+x) F(x) + x^2 A_{p_1}(x)}{1+x+x^2F(x)}.
\end{align*}
\end{thm}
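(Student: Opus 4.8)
The plan is to derive a single functional equation in the spirit of the avoidance argument for Theorem~\ref{th:pattern 28}, but now anchored to the pattern Nr.~30 rather than to the pattern~$X$. First I would set up the ``leftmost $a$'' decomposition. Given a permutation $\pi$ containing at least one occurrence of $p$, among all occurrences $ab$ of the underlying pattern I choose the one with the \emph{leftmost} possible $a$; since the shading around $a$ and $b$ forces $b$ to be determined once $a$ is fixed (exactly as in Figure~\ref{pic-thm-pat-30}), the pair $ab$ contributes the factor $x^2$. The key structural observation I would establish is that the box $A$ (to the lower-left, below and left of $a$) must avoid the pattern Nr.~30 itself, because any occurrence there would produce an occurrence of $p$ with a strictly leftmost $a$, contradicting minimality. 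Hence the contribution of $A$ is governed by the avoidance generating function from Lemma~\ref{lem-pattern-30}, namely $A(x)=\tfrac{(1+x)F(x)}{1+x+x^2F(x)}$.

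Next I would account for the remaining boxes. The box labeled $p_1$ in Figure~\ref{pic-thm-pat-30} (the generalization box) must contain at least one occurrence of $p_1$, contributing $F(x)-A_{p_1}(x)$; the hypothesis that $p_1$ has its leftmost bottom box non-shaded is exactly what guarantees that an occurrence of $p_1$ in that box, together with $ab$, assembles into a genuine occurrence of $p$ without creating spurious interactions across the shaded boundary. Assembling these pieces, the generating function for permutations containing $p$ (with the leftmost-$a$ normalization) should take the form $x^2\,A(x)\,(F(x)-A_{p_1}(x))$ times whatever free factor the unconstrained boxes contribute, and subtracting from $F(x)$ yields a functional equation
\begin{equation*}
A_p(x) + x^2 A(x)\bigl(F(x)-A_{p_1}(x)\bigr)\cdot(\text{free factor}) = F(x).
\end{equation*}
Solving for $A_p(x)$ and substituting $A(x)=\tfrac{(1+x)F(x)}{1+x+x^2F(x)}$ should, after simplification, collapse to the stated closed form $A_p(x)=\tfrac{(1+x)F(x)+x^2A_{p_1}(x)}{1+x+x^2F(x)}$.

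The main obstacle I anticipate is pinning down the precise bookkeeping so that the final algebra reproduces the target formula exactly; in particular, the appearance of $A_{p_1}(x)$ rather than $F(x)-A_{p_1}(x)$ in the numerator suggests that the ``free factor'' and the $A(x)$ contribution must interact in a more delicate way than a naive product, and the equation is likely \emph{not} simply $A_p(x)+x^2A(x)(F(x)-A_{p_1}(x))=F(x)$. The honest structural work will be determining whether the recursion is self-referential (i.e.\ $A_p(x)$ appearing on both sides, as in Theorem~\ref{th:pattern 28}, producing the denominator $1+x+x^2F(x)$ directly) or whether one first solves a Nr.~30-avoidance subproblem and then adds a correction term encoding the permitted occurrences of $p_1$. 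I would therefore carefully identify which box forces self-reference: if the box $A$ must avoid $p$ itself, then $A_p(x)$ reappears and the geometric-series structure of Lemma~\ref{lem-pattern-30} is inherited, explaining the denominator. Matching the numerator $ (1+x)F(x)+x^2A_{p_1}(x)$ is then a matter of verifying that the permitted configurations (empty $p_1$-box, versus a nonempty occurrence of $p_1$) contribute $(1+x)F(x)$ and $x^2A_{p_1}(x)$ respectively, which is the calculation I expect to require the most care.
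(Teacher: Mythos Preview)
Your overall plan---leftmost $a$, then $b$ determined, then $x^2$ times a constraint on box $A$ times $F(x)-A_{p_1}(x)$ for the North East box, then subtract from $F(x)$---is exactly the paper's structure, and there is no self-reference: $A_p$ does \emph{not} reappear on the right. But the constraint you impose on $A$ is wrong, and this is why your algebra refuses to close.

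You claim that $A$ must avoid Nr.~30, giving $\tfrac{(1+x)F(x)}{1+x+x^2F(x)}$. In fact $A$ must avoid \emph{two} patterns simultaneously: Nr.~30 \emph{and} the length-1 pattern $\pattern{scale=0.6}{1}{1/1}{0/1,1/0,1/1}$ (``last element is the maximum''). The second condition arises because if the last element $c$ of $A$ is the maximum of $A$, then the adjacent pair $ca$ is itself an occurrence of Nr.~30 in the full permutation (everything left of $c$ is below $c$; $c,a$ are adjacent; everything right of $a$ is above $a$), and its North East box---namely $b$ together with the original North East box---still contains an occurrence of $p_1$. This last implication is precisely where the hypothesis that $p_1$ has its leftmost bottom box non-shaded is used: adding $b$ (and possibly further elements of $A$) in the South West corner of a $p_1$-occurrence does not destroy it. You located that hypothesis in the wrong place; it is not needed to assemble $ab$ with a $p_1$-occurrence into an occurrence of $p$, but rather to propagate a $p_1$-occurrence upward to a larger North East box when shifting to a more leftward $a$.

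Once you know $A$ avoids both patterns, you compute its generating function $B(x)$ by splitting the Nr.~30-avoiders from Lemma~\ref{lem-pattern-30} according to whether or not the last element is the maximum: $B(x)+xB(x)=\tfrac{(1+x)F(x)}{1+x+x^2F(x)}$, so $B(x)=\tfrac{F(x)}{1+x+x^2F(x)}$. The functional equation is then exactly
\[
A_p(x)+x^2\,B(x)\,\bigl(F(x)-A_{p_1}(x)\bigr)=F(x),
\]
with no free factor and no self-reference, and the stated formula follows by substitution.
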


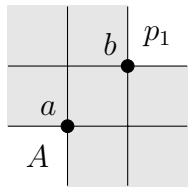
\begin{figure}[!ht]
\begin{center}
	\begin{tikzpicture}[scale=0.8, baseline=(current bounding box.center)]
	\foreach \x/\y in {0/1,0/2,1/0,1/1,1/2,2/0,2/1}		
	\fill[gray!20] (\x,\y) rectangle +(1,1);
	\draw (0.01,0.01) grid (2+0.99,2+0.99);
	\filldraw (1,1) circle (3pt) node[above left] {$a$};
	\filldraw (2,2) circle (3pt) node[above left] {$b$};
	\node at (2.5,2.5) {$p_1$};
    \node at (0.5,0.5) {$A$};
	\end{tikzpicture}
\caption{Related to the proof of Theorem~\ref{th:pattern 30}}\label{pic-thm-pat-30}
\end{center}
\end{figure}

\begin{proof}
%Let $C(x)$ be the generating function for $S(\pattern{scale=0.6}{2}{1/1,2/2}{0/1,1/2,2/0,1/0,1/1,2/1,0/2}\ )$.
%Then, it follows from Lemma~\ref{lem-pattern-30} that
%$$C(x)=\frac{(1+x)F(x)}{1+x+x^2F(x)}.$$
We have the following functional equation
\begin{equation}\label{av-pattern-30}
A_p(x)+ \frac{x^2 F(x)}{1+x+x^2F(x)}\big(F(x)-A_{p_1}(x) \big)=F(x).
\end{equation}
Indeed, the right hand side counts all permutations, and the left hand side considers separately permutations avoiding $p$, counted by the $A_p(x)$ term in \eqref{av-pattern-30}, and those containing at least one occurrence of $p$. In the latter case, among all such occurrences, we can pick the occurrence $ab$ with the \emph{leftmost} possible $a$ as shown in Figure~\ref{pic-thm-pat-30}. Referring to this figure, we note that the North East box must contain at least one occurrence of $p_1$,  counted by $F(x)-A_{p_1}(x)$. Moreover, a permutation in box $A$ must avoid  both patterns
$\pattern{scale=0.6}{2}{1/1,2/2}{0/1,1/2,2/0,1/0,1/1,2/1,0/2}$ and
$\pattern{scale = 0.8}{1}{1/1}{0/1,1/0,1/1}$, since $a$ is the leftmost possible.
We denote by $B(x)$ the generating function of such permutations. Then, we have that
\begin{align*}
	B(x) + x B(x) = \frac{(1+x)F(x)}{1+x+x^2F(x)}
\end{align*}
by dividing the $\pattern{scale=0.6}{2}{1/1,2/2}{0/1,1/2,2/0,1/0,1/1,2/1,0/2}$-avoiding permutations, whose enumeration is given by Lemma~\ref{lem-pattern-30}, into two parts depending on whether they  avoid $\pattern{scale = 0.8}{1}{1/1}{0/1,1/0,1/1}$.
Note that when a permutation contains the pattern $\pattern{scale = 0.8}{1}{1/1}{0/1,1/0,1/1}$, the sub-permutation consisting of the first $n-1$ positions avoids both  patterns
$\pattern{scale=0.6}{2}{1/1,2/2}{0/1,1/2,2/0,1/0,1/1,2/1,0/2}$ and
$\pattern{scale = 0.8}{1}{1/1}{0/1,1/0,1/1}$.
Therefore, we get that
\begin{align*}
 B(x) = \frac{F(x)}{1+x+x^2F(x)}.
\end{align*}
Finally, $a$ and $b$ contribute the factor of $x^2$.
Substituting  the formula of $B(x)$ into  \eqref{av-pattern-30}, we obtain the desired formula of $A_p(x)$.
 This completes the proof.
\end{proof}

\subsection{Pattern Nr. 27}

    We next consider avoidance of a generalization of the pattern Nr. 27 = $\pattern{scale=0.6}{2}{1/1,2/2}{0/1,0/2,1/0,1/1,2/0,2/2}$.

%Page 19

\begin{thm}\label{th:pattern 27}
Suppose that $p$ is the pattern shown in Figure~\ref{pic-thm-pat-27}, where $p_1$ and $p_2$ are  any mesh patterns, and the labels $a$, $b$, and $A$ are to be ignored. Then, the avoidance of $p$ is given by
\begin{align*}		
A_p(x)=F(x)-x^2 \frac{F(x)}{1+xF(x)} \big(F(x)-A_{p_1}(x)\big) \big(F(x)-A_{p_2}(x) \big).
\end{align*}
\end{thm}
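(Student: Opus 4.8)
The plan is to mimic the functional-equation approach used throughout Section~\ref{sec3}, since Pattern Nr.~27 has the same flavor as Pattern Nr.~22 but with one extra $X$-avoidance constraint in box $A$. First I would set up the dichotomy: every permutation counted by $F(x)$ either avoids $p$, contributing $A_p(x)$, or contains at least one occurrence of $p$. The key is to show that in any occurrence of $p$ the elements $a$ and $b$ (in Figure~\ref{pic-thm-pat-27}) are \emph{uniquely determined}, so that the $ab$ pair contributes exactly $x^2$, and then to factor the surrounding boxes into independent contributions.

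The main structural observation I would establish is which boxes are forced and what constraints they carry. Reading off Figure~\ref{pic-thm-pat-27}, the shading around the $21$-pattern formed by $a$ and $b$ (note $a$ is the larger, in the lower-left, $b$ the smaller, in the upper-right, matching the underlying permutation $\tau=12$ with points at $(1,1)$ and $(2,2)$) should force $a$ and $b$ to be uniquely pinned down once one fixes, say, the leftmost valid $a$. The two boxes labeled $p_1$ and $p_2$ must each contain at least one occurrence of the respective inserted pattern, contributing factors $F(x)-A_{p_1}(x)$ and $F(x)-A_{p_2}(x)$. The novelty compared to Theorem~\ref{th:pattern 22} is the box $A$: the shading of Nr.~27 (boxes $(0,1),(1,1)$ shaded) means that whatever sits in box $A$, together with the element $a$, must not create a competing occurrence, which I expect forces box $A$ to be $X$-avoiding in the sense of Theorem~\ref{thm-length-1}. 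Its generating function is therefore $B(x)=\frac{F(x)}{1+xF(x)}$ by \eqref{th:pattern X-B(x)}.

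Assembling these pieces, I would write the functional equation
\begin{equation*}
A_p(x)+x^2\,\frac{F(x)}{1+xF(x)}\,\big(F(x)-A_{p_1}(x)\big)\big(F(x)-A_{p_2}(x)\big)=F(x),
\end{equation*}
where the second term counts permutations with at least one occurrence of $p$: the factor $\frac{F(x)}{1+xF(x)}$ is the $X$-avoiding contribution of box $A$, the two difference factors are the $p_1$- and $p_2$-boxes, and $x^2$ accounts for $a$ and $b$. Solving for $A_p(x)$ yields exactly the claimed formula. The only subtlety is verifying that the boxes genuinely decouple, i.e.\ that no occurrence of $p_1$ or $p_2$ can straddle two regions and that fixing the leftmost $a$ really makes $b$ and the box decomposition unambiguous.

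The hardest part will be justifying why box $A$ contributes $X$-avoidance rather than something more complicated, and confirming that this is consistent with choosing $a$ \emph{leftmost}: one must argue that if box $A$ contained an occurrence of the pattern $\pattern{scale=0.8}{1}{1/1}{0/1,1/0}$, it would yield an occurrence of $p$ with a strictly leftmost first element, contradicting minimality. This is the same mechanism as in the proof of Theorem~\ref{th:pattern 28} and Theorem~\ref{th:pattern X}, so I would import that reasoning rather than re-deriving it. Once the $X$-avoidance of box $A$ is pinned down, everything else is a routine product-of-generating-functions argument, identical in spirit to Theorems~\ref{th:pattern 20} and~\ref{th:pattern 22}.
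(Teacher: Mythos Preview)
Your overall strategy—set up the dichotomy, factor the non-shaded boxes into independent contributions, and recognize that box $A$ should contribute the $X$-avoidance generating function $B(x)=\frac{F(x)}{1+xF(x)}$—matches the paper's approach and lands on the correct functional equation. However, the canonical choice you make, and hence your justification for the $X$-avoidance of box $A$, does not work. You fix the \emph{leftmost} $a$ and then claim that an occurrence of $X$ inside box $A$ ``would yield an occurrence of $p$ with a strictly leftmost first element.'' But box $A$ occupies the column range between $a$ and $b$, i.e.\ it lies strictly to the \emph{right} of $a$; no element of box $A$ can serve as a more-leftmost first element. In Theorems~\ref{th:pattern 28} and~\ref{th:pattern X}, which you cite as the model, the constrained box sits to the \emph{left} of the element chosen leftmost—that geometric relationship is what makes the mechanism work, and it is absent here. (There is also a small slip earlier: since $\tau=12$, the element $a$ at $(1,1)$ is the \emph{smaller} one, not the larger.) Moreover, fixing only the leftmost $a$ does not make $b$ unique, so your decomposition is not yet well-defined.

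The paper instead picks the occurrence with the \emph{leftmost possible} $b$; then $a$ is uniquely determined (a second candidate $a'$ strictly between $a$ and $b$ would have to lie above $b$ by the shading of $(1,0)$ and $(1,1)$, contradicting $a'<b$), and now box $A$ lies to the \emph{left} of $b$. It is this choice that forces box $A$ to be $X$-avoiding: an $X$-occurrence there would furnish a valid $b'$ strictly left of $b$, contradicting minimality. Once you swap your normalization from ``leftmost $a$'' to ``leftmost $b$'', the rest of your argument goes through verbatim.
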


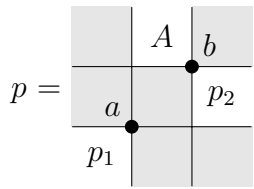
\begin{figure}[!ht]
\begin{center}
$p$~=~\begin{tikzpicture}[scale=0.8, baseline=(current bounding box.center)]
	\foreach \x/\y in {0/1,0/2,1/0,1/1,2/0,2/2}		
	\fill[gray!20] (\x,\y) rectangle +(1,1);
	\draw (0.01,0.01) grid (2+0.99,2+0.99);
	\filldraw (1,1) circle (3pt) node[above left] {$a$};
	\filldraw (2,2) circle (3pt) node[above right] {$b$};
	\node at (0.5,0.5) {$p_1$};
	\node at (2.5,1.5) {$p_2$};
	\node at (1.5,2.5) {$A$};
	\end{tikzpicture}
\caption{Related to the proof of Theorem~\ref{th:pattern 27}}\label{pic-thm-pat-27}
\end{center}
\end{figure}

\begin{proof}
We have the following functional equation:
\begin{equation}\label{av-pattern-27}
A_p(x)+x^2 B(x) \big(F(x)-A_{p_1}(x)\big) \big(F(x)-A_{p_2}(x) \big)=F(x),
\end{equation}
where $B(x)$ is the generating function for the number of $X$-avoiding permutations given in Theorem~\ref{thm-length-1}, which satisfies
$$B(x)=\frac{F(x)}{1+xF(x)}.$$
Indeed, $F(x)$ on the right hand side of \eqref{av-pattern-27} counts all permutations. On the left hand side of \eqref{av-pattern-27}, we count separately permutations avoiding $p$, counted  by the $A_p(x)$ term, and those containing at least one occurrence of $p$. In the latter case, among all occurrences of $p$, we pick the occurrence $ab$ with  the {\em leftmost} possible $b$  as shown in Figure~\ref{pic-thm-pat-27} and $a$ is then uniquely determined. Referring to this figure, we note that the South West box must contain at least one occurrence of $p_1$, counted by $F(x)-A_{p_1}(x)$, and the East box must contain at least one occurrence of $p_2$, counted by $F(x)-A_{p_2}(x)$. Moreover, the permutation in the box $A$ must avoid the pattern $X$, counted by $B(x)$, since $b$ is the leftmost possible. Finally, $a$ and $b$ contribute the factor of $x^2$. Thus, this completes the proof of~\eqref{av-pattern-27}.
Substituting the formula of $B(x)$ into~\eqref{av-pattern-27}, we obtain the desired formula of   $A_p(x)$.
\end{proof}

Theorem~\ref{th:pattern 27} generalizes  the avoidance  of the pattern Nr.\ 27. However, generalizing its distribution is hard, because we need to control at the same time occurrences of the patterns $p$ and $Z = \pattern{scale = 0.8}{1}{1/1}{0/0,1/1}$ in the box $A$.
Moreover, we cannot further generalize Theorem~\ref{th:pattern 27} by placing a mesh pattern $p_3$ in the box $A$, because  $A$ must avoid $Z$ when requiring from $b$ to be the leftmost, so we will be forced to control two patterns $p_3$ and $Z$ at the same time. Of course, we can require from $b$ to be the rightmost, but then we will be forced to control two patterns in the East box. Finally, swapping $A$ and $p_2$ in Theorem~\ref{th:pattern 27} leads to the same enumeration result, which is not hard to see.

\subsection{Pattern Nr. 34}
We next consider generalizations of the pattern
Nr. 34 = $\pattern{scale=0.6}{2}{1/1,2/2}{0/0,0/1,1/0,1/1,1/2,2/1,2/2}$.

\begin{lem}[{\cite[Theorem 3.7]{SZ}}]\label{lem-pat-34}
	Let $p=\pattern{scale=0.6}{2}{1/1,2/2}{0/1,1/2,0/0,2/2,1/0,1/1,2/1}$.
%	$F(x,q):=\sum_{n\geq 0}x^n\sum_{\pi\in S_n}q^{p(\pi)}$, and $A(x)$ be the generating function for $S(p)$.
	Then, the avoidance and distribution of $p$ are
	\begin{align*}
	A_p(x)= \frac{F(x)}{1+x^2F(x)},
	\ \ \ \ \
	F_p(x,q)=\frac{F(x)}{1+(1-q)x^2F(x)}.
	\end{align*}
\end{lem}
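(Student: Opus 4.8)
The plan is to first decode the shading of $p$ into an explicit combinatorial condition, and then to peel off a canonical occurrence in order to obtain functional equations for $A_p(x)$ and $F_p(x,q)$ that can be solved directly. The only non‑shaded boxes of $p$ are the lower‑right box $(2,0)$ and the upper‑left box $(0,2)$. Reading off the remaining shaded boxes, a pair of elements playing the roles of the two dots, say with values $a<b$, is an occurrence exactly when: the three middle‑column boxes being shaded forces the two elements to sit in adjacent positions; every element to their left has value larger than $b$ (boxes $(0,0),(0,1)$ shaded); and every element to their right has value smaller than $a$ (boxes $(2,1),(2,2)$ shaded). Since no element can have value strictly between $a$ and $b$ anywhere (such an element would fall in a shaded box), the occurrence forces $b=a+1$, and the whole permutation must have the block form $\pi = L\,a\,(a+1)\,S$, where $L$ is an arrangement of the values $\{a+2,\dots,n\}$ and $S$ is an arrangement of $\{1,\dots,a-1\}$. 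Thus there is at most one occurrence for each value $a$, and $p(\pi)$ is the number of values $a$ admitting such a decomposition.

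The key structural step, which I expect to be the main obstacle to state cleanly, is a \emph{localization} claim: for a fixed occurrence at value $a$, the occurrences of $p$ at values $>a$ are exactly the occurrences of $p$ internal to the block $L$, and the occurrences at values $<a$ are exactly those internal to $S$; moreover no occurrence can sit at value $a\pm 1$. I would prove this by verifying that the defining ``all larger values to the left, all smaller values to the right'' conditions for an internal occurrence of $L$ (respectively $S$) lift verbatim to the same conditions in $\pi$: here one uses that $L$ occupies positions entirely to the left of, and values entirely above, the pair $a\,(a+1)$, while $S$ lies entirely to the right and below it, so that the extra elements outside the sub‑block automatically satisfy the required inequalities. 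The exclusion of occurrences at $a\pm1$ is immediate from the forced positions $n-a$ and $n-a+1$ of $a$ and $a+1$. This decoupling is precisely what makes the forthcoming peeling multiplicative.

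With localization in hand, I would peel at the \emph{largest} value $a$ carrying an occurrence. By the localization claim, $L$ then carries no occurrence whatsoever, so $L$ is $p$-avoiding (generating function $A_p(x)$); the pair $a\,(a+1)$ contributes a factor $x^2$ together with exactly one occurrence; and $S$ is an arbitrary permutation carrying its own occurrences (generating function $F_p(x,q)$). Since this peeling is a bijection between permutations with at least one occurrence and triples (a $p$-avoiding $L$, the pair, an arbitrary $S$), counting without weights yields the avoidance equation
\[
F(x)-A_p(x)=x^2 A_p(x) F(x),
\]
while weighting each permutation by $q^{p(\pi)}=q^{0}\cdot q^{1}\cdot q^{p(S)}$ (no occurrences in $L$, one at $a$, and the occurrences of $S$) yields the distribution equation
\[
F_p(x,q)-A_p(x)=q\,x^2 A_p(x) F_p(x,q).
\]

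Finally I would simply solve. The first equation gives $A_p(x)=\frac{F(x)}{1+x^2F(x)}$. Substituting this into the second and clearing denominators collapses the mixed term into the single correction $(1-q)x^2F(x)$ in the denominator, giving $F_p(x,q)\bigl(1+(1-q)x^2F(x)\bigr)=F(x)$, that is $F_p(x,q)=\frac{F(x)}{1+(1-q)x^2F(x)}$, as claimed. As consistency checks one verifies $F_p(x,1)=F(x)$ and $F_p(x,0)=A_p(x)$, and one can confirm the first few coefficients against small cases (for instance $3412$ decomposes as the empty $L$, the pair $34$, and $S=12$, correctly recording its two occurrences).
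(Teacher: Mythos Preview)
Your argument is correct. The paper does not actually prove this lemma; it is quoted from \cite[Theorem 3.7]{SZ}, and the paper only remarks that ``essentially the same arguments'' extend to Theorem~\ref{thm-pat-34}. Your decoding of the shading is accurate, your localization claim is the right structural observation (and your justification for it is sound, including the exclusion of occurrences at values $a\pm1$), and your peeling at the largest occurrence yields exactly the functional equations
\[
F(x)=A_p(x)+x^2A_p(x)F(x),\qquad F_p(x,q)=A_p(x)+qx^2A_p(x)F_p(x,q),
\]
which solve to the stated formulas. This matches the style of all the decomposition proofs in the paper (compare the proof of Theorem~\ref{th:pattern 28}, which is the closest analogue), so your route is not just correct but presumably the intended one in \cite{SZ} as well.
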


%\begin{figure}[!ht]
%\begin{center}
%	\begin{tikzpicture}[scale=1, baseline=(current bounding box.center)]
%	\foreach \x/\y in {0/0,0/1,1/0,1/1,1/2,2/1,2/2}		
%	\fill[gray!20] (\x,\y) rectangle +(1,1);
%	\draw (0.01,0.01) grid (2+0.99,2+0.99);
%	\filldraw (1,1) circle (2pt) node[above left] {$a$};
%	\filldraw (2,2) circle (2pt) node[above left] {$b$};
%	\node  at (0.5,2.5) {$A$};
%	\node  at (2.5,0.5) {$B$};
%	\end{tikzpicture}
%		\caption{Related to Lemma~\ref{lem-pat-34}}\label{pic-lem-pat-34}
%\end{center}
%\end{figure}

Replacing the two elements in the pattern  Nr.\ 34 by the pattern $1p_1$, where $p_1$ is any permutation of $\{2,3,\ldots, k\}$, $k\geq 2$, with all boxes shaded as in Figure~\ref{pic-thm-pat-34}, we can apply essentially the same arguments as in the proof of Lemma~\ref{lem-pat-34} in \cite{SZ} to obtain the following theorem.

%\begin{figure}[!ht]
%\begin{center}
%$p$~=~\begin{tikzpicture}[scale=0.8, baseline=(current bounding box.center)]
%	\foreach \x/\y in {0/0,0/1,1/0,1/1,1/2,2/1,2/2}		
%	\fill[gray!20] (\x,\y) rectangle +(1,1);
%	\draw (0.01,0.01) grid (2+0.99,2+0.99);
%	\filldraw (1,1) circle (2pt) node[above left] {};
%	\filldraw (2,2) circle (2pt) node[above right] {};
%    \node at (1.5,1.6) {$\iddots$};
%	\end{tikzpicture}
%\caption{Related to Theorem~\ref{thm-pat-34}}\label{pic-thm-pat-34}
%\end{center}
%\end{figure}

\begin{figure}[!ht]
\begin{center}
$p$~=~\begin{tikzpicture}[scale=0.8, baseline=(current bounding box.center)]
	\foreach \x/\y in {0/0,0/1,1/0,1/1,1/2,2/1,2/2}		
	\fill[gray!20] (\x,\y) rectangle +(1,1);
	\draw (0.01,0.01) grid (2+0.99,2+0.99);
	\filldraw (1,1) circle (3pt) node[above left] {};
    \node at (1.5,1.5) {$p_1$};
	\end{tikzpicture}
\caption{Related to Theorem~\ref{thm-pat-34}}\label{pic-thm-pat-34}
\end{center}
\end{figure}
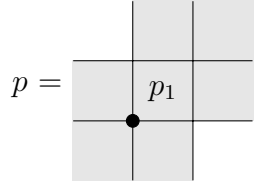

\begin{thm}\label{thm-pat-34}
	Suppose that $p$ is the pattern shown in Figure~\ref{pic-thm-pat-34}, where $k\geq 1$ elements are in increasing order in the middle box. Then, the avoidance and distribution of $p$ are given by
	\begin{align*}
	A_p(x)= \frac{F(x)}{1+x^kF(x)},
	\ \ \ \ \
	F(x,q)=\frac{F(x)}{1+(1-q)x^kF(x)}.
	\end{align*}
\end{thm}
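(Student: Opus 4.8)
The plan is to mirror the proof of Lemma~\ref{lem-pat-34} (the case $k=2$), the only change being that the distinguished occurrence now consists of $k$ elements rather than $2$ and hence contributes a factor $x^k$ in place of $x^2$. First I would record the combinatorial meaning of an occurrence of the pattern $p$ in Figure~\ref{pic-thm-pat-34}. Since $\tau=12\cdots k$ and every box is shaded except the top-left and bottom-right corners (the frame coming from Nr.~34 together with the all-shaded $p_1=23\cdots k$), a subsequence $\pi_{i_1}<\pi_{i_2}<\cdots<\pi_{i_k}$ is an occurrence precisely when the $i_j$ are $k$ \emph{consecutive} positions carrying $k$ \emph{consecutive} values in increasing order, with every element to the left of this block larger than all of them and every element to the right smaller than all of them. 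In particular such a block is flanked by a descent on each side, so no increasing occurrence can straddle its boundary; note this description also covers $k=1$, where it reduces to the pattern $Z$ of Theorem~\ref{thm-length-1}.

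Next I would set up the avoidance equation by a first-block decomposition. Given a permutation containing $p$, pick the occurrence whose values $v,v+1,\dots,v+k-1$ have $v$ minimal; this occurrence is unique and splits the permutation as $L\cdot B\cdot R$, where $B$ is the block, $L$ is the prefix carrying the values $>v+k-1$, and $R$ is the suffix carrying the values $<v$. The point to verify is that $R$ must avoid $p$, since an occurrence inside $R$ would have smaller bottom value (and conversely any occurrence supported in $R$ is automatically an occurrence of the whole permutation, as everything outside $R$ to its left is larger and everything to its right lies in $R$), while $L$ is arbitrary. As $B$ contributes $x^k$, this gives
\begin{equation*}
F(x)=A_p(x)+x^kF(x)A_p(x),
\end{equation*}
so that $A_p(x)=F(x)/\bigl(1+x^kF(x)\bigr)$, which recovers Theorem~\ref{thm-length-1} at $k=1$ and Lemma~\ref{lem-pat-34} at $k=2$.

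For the distribution I would use the same decomposition while marking occurrences by $q$. The crucial observation is that the occurrences of $L\cdot B\cdot R$ are exactly $B$ together with those lying entirely inside $L$: recalling that $R$ avoids $p$ and that no occurrence crosses a block boundary, it remains to note that no occurrence can use both a value $\le v+k-1$ and a value $>v+k-1$, for the larger value (which sits at the occurrence's rightmost position) would lie in $L$, to the left of the smaller one. Hence $p(L\cdot B\cdot R)=1+p(L)$, and the decomposition yields
\begin{equation*}
F_p(x,q)=A_p(x)+qx^kA_p(x)F_p(x,q).
\end{equation*}
Solving for $F_p$ gives $F_p(x,q)=A_p(x)/\bigl(1-qx^kA_p(x)\bigr)$, and substituting $A_p(x)=F(x)/\bigl(1+x^kF(x)\bigr)$ collapses this to $F_p(x,q)=F(x)/\bigl(1+(1-q)x^kF(x)\bigr)$, as claimed.

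I expect the main obstacle to be the bookkeeping that makes the decomposition a genuine bijection: one must confirm that the minimal-$v$ occurrence is uniquely determined, that ``occurrence supported in $R$ (resp.\ $L$)'' coincides with ``occurrence of the full permutation supported there,'' and that no occurrence crosses the block boundary. All of these rest entirely on the left-larger/right-smaller shading of $p$, and once they are in place the two functional equations and the subsequent algebra are routine.
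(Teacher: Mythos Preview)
Your proposal is correct and follows exactly the line the paper indicates: the paper does not spell out a proof but simply says that ``essentially the same arguments as in the proof of Lemma~\ref{lem-pat-34}'' apply with $x^2$ replaced by $x^k$, and your first-block (minimal-$v$) decomposition together with the two functional equations $F(x)=A_p(x)+x^kF(x)A_p(x)$ and $F_p(x,q)=A_p(x)+qx^kA_p(x)F_p(x,q)$ is precisely that argument carried out in detail. Your verification that occurrences cannot straddle the block boundary and that $p(L\cdot B\cdot R)=1+p(L)$ supplies the bookkeeping the paper leaves implicit.
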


    The avoidance of the pattern Nr.\ 34 can  be generalized, which is done in the next theorem,  but the distribution is hard because we need to control $p_2$ and $p$ in the same box in that theorem.

%Page 21
\begin{thm}\label{th:pattern 34-2}
Suppose that $p$ is the pattern shown in Figure~\ref{pic-thm-pat-34-2}, where $p_1$ is any permutation of $\{2,3,\ldots, k\}$, $k\geq 1$, with all boxes shaded, $p_2$ is any mesh pattern, and the labels $a$ and $A$ are to be ignored. Then, the avoidance of $p$ is given by
\begin{align*}		
A_p(x)=F(x)-  \frac{x^k F(x)}{1+x^k F(x)} \big(F(x)-A_{p_2}(x)\big).
\end{align*}
\end{thm}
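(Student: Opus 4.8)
The plan is to set up a single functional equation analogous to \eqref{av-pattern-28-2} and \eqref{av-pattern-34}-type arguments, counting all permutations as either avoiding $p$ or containing a canonically-chosen occurrence of $p$. First I would examine the pattern in Figure~\ref{pic-thm-pat-34-2}: it consists of the element $a$ (the smallest), followed immediately in position and value by the $k-1$ elements of $p_1$ (all shaded around them, so they form a tight increasing-type block as in Theorem~\ref{thm-pat-34}), with the box $A$ to the upper-left and the mesh pattern $p_2$ inserted in the lower-right box. The shading from the underlying pattern Nr.~34 forces the $k$ elements forming the $1p_1$ block to be consecutive in both position and value, so that block contributes $x^k$, exactly as in Theorem~\ref{thm-pat-34} where the block contributed $x^k$.

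The key step is to pick a canonical occurrence so that the remaining boxes decouple. Following the pattern of the earlier proofs (and the remark after Theorem~\ref{th:pattern 28-2}), I would choose the occurrence with the \emph{leftmost} possible $a$ (equivalently, since the $1p_1$ block is rigid, the leftmost possible block). Then I would argue that box $A$, lying to the upper left, must avoid the base pattern Nr.~34 itself; by Lemma~\ref{lem-pat-34} with the length-$k$ generalization from Theorem~\ref{thm-pat-34}, the generating function for such $A$ is $\frac{F(x)}{1+x^kF(x)}$. The lower-right box carrying $p_2$ must contain at least one occurrence of $p_2$, contributing the factor $F(x)-A_{p_2}(x)$. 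This yields the functional equation
\begin{equation*}
A_p(x)+\frac{x^k F(x)}{1+x^kF(x)}\big(F(x)-A_{p_2}(x)\big)=F(x),
\end{equation*}
from which the stated formula for $A_p(x)$ follows immediately by rearranging.

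The main obstacle, and the reason one must be careful rather than merely invoking the earlier templates, is justifying that box $A$ avoids the base Nr.~34 pattern (length-$k$ version) and \emph{not} the full pattern $p$ with its inserted $p_2$. The subtlety is the interaction between the ``leftmost $a$'' choice and the shading: I must verify that no \emph{earlier} occurrence of the $1p_1$ block together with material in $A$ and the lower-right region could form another occurrence of $p$, which forces $A$ to avoid precisely the shaded base pattern (the $p_2$-box plays no role in $A$ because any such alternative occurrence would use a block strictly to the left of the chosen leftmost one). I would spell out, as in the itemized verification in the proof of Theorem~\ref{th:pattern 22}, why the shading rules out occurrences straddling the boxes, so that the $A$-box and the $p_2$-box contribute independently. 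Once this decoupling is established, the computation is routine; the authors' own remark that the distribution is hard ``because we need to control $p_2$ and $p$ in the same box'' signals precisely that for avoidance this difficulty collapses, since $A_{p_2}(x)$ alone suffices and no simultaneous tracking of two statistics in one box is required.
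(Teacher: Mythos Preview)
Your proposal is essentially identical to the paper's proof: both pick the occurrence with the leftmost $a$, use Theorem~\ref{thm-pat-34} to get $D(x)=\frac{F(x)}{1+x^kF(x)}$ for the box $A$, take $F(x)-A_{p_2}(x)$ for the South East box, and assemble the functional equation $A_p(x)+x^k D(x)\big(F(x)-A_{p_2}(x)\big)=F(x)$. The subtlety you flag --- that $A$ must avoid the base pattern of Figure~\ref{pic-thm-pat-34} rather than the full pattern $p$ --- is exactly what the paper dispatches with the single sentence ``There are no other restrictions on $A$ because the pattern in Figure~\ref{pic-thm-pat-34} cannot begin in $A$ and end somewhere else.''
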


\begin{figure}[!ht]
\begin{center}
$p$~=~\begin{tikzpicture}[scale=0.8, baseline=(current bounding box.center)]
	\foreach \x/\y in {0/0,0/1,1/0,1/1,1/2,2/1,2/2}		
	\fill[gray!20] (\x,\y) rectangle +(1,1);
	\draw (0.01,0.01) grid (2+0.99,2+0.99);
	\filldraw (1,1) circle (3pt) node[above left] {};
%	\filldraw (2,2) circle (3pt) node[above right] {};
    \node at (1.5,1.5) {$p_1$};
	\node at (2.5,0.5) {$p_2$};
	\filldraw (1,1) circle (3pt) node[above left] {$a$};
	\node at (0.5,2.5) {$A$};
	\end{tikzpicture}
\caption{Related to the proof of Theorem~\ref{th:pattern 34-2}}\label{pic-thm-pat-34-2}
\end{center}
\end{figure}
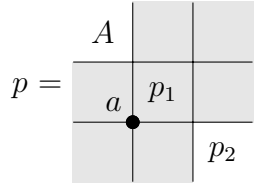

\begin{proof}
Let $D(x)$ be the generating function for the number of permutations avoiding the pattern in Figure~\ref{pic-thm-pat-34}.
Then, it follows from Theorem~\ref{thm-pat-34} that
$$D(x)=\frac{F(x)}{1+x^kF(x)}.$$
We have the following functional equation:
\begin{equation}\label{av-pattern-34}
A_p(x)+x^k D(x)  \big(F(x)-A_{p_2}(x) \big)=F(x).
\end{equation}
Indeed, the right hand side counts all permutations. On the left hand side, we count separately permutations avoiding $p$, counted by $A_p(x)$, and those containing at least one occurrence of $p$. In the latter case, among all occurrences of $p$, we can pick the occurrence with the \emph{leftmost} possible $a$ as shown in Figure~\ref{pic-thm-pat-34-2}. Referring to this figure, we note that the South East box must contain at least one occurrence of $p_2$, counted by $F(x)-A_{p_2}(x)$. Moreover, the permutation in box $A$ must avoid the pattern in Figure~\ref{pic-thm-pat-34}
 that is counted by $D(x)$, since  $a$ is the leftmost possible. There are no other restrictions on $A$ because the pattern in Figure~\ref{pic-thm-pat-34} cannot begin in $A$ and end somewhere else. Finally, the $k$ elements in the middle box contribute the factor of $x^k$. Thus, by combing with  the formula of $D(x)$, we complete the proof of   \eqref{av-pattern-34},  and  hence give the formula of $A_p(x)$.
\end{proof}

\section{Concluding remarks}\label{final-sec}

We have a number of general results related to distribution or avoidance of several infinite families of mesh patterns. How to describe the class of mesh patterns for which our distribution or avoidance results can be applicable? Namely, in which situations one can break the problem of enumerating mesh patterns into smaller problems using our theorems? What is the complexity of recognizing the class?

\section*{\bf Acknowledgments}
The first author is grateful to the administration of the Center for Combinatorics at Nankai University for their hospitality during the author's stay in April 2018.
The second author was partially supported by the National Science Foundation of China (Nos.  11701424).

\end{document}